\numberwithin{equation}{section}
\begin{document}

\title{A Branch and Bound Algorithm for Multiobjective Optimization Problems Using General Ordering Cones}

\titlerunning{}        

\author{Weitian Wu$^1$ \and Xinmin Yang$^2$ }


\institute{ W.T. Wu \at School of Statistics and Data Sciences, Ningbo University of Technology 315211, China\\
                    \email{weitianwu@nbut.edu.cn}\\
           \Letter X.M. Yang \at National Center for Applied Mathematics of Chongqing 401331, China\\
           School of Mathematical Sciences, Chongqing Normal University, Chongqing 401331, China\\
              xmyang@cqnu.edu.cn}

\date{Received: date / Accepted: date}

\maketitle

\begin{abstract}


Many existing branch and bound algorithms for multiobjective optimization problems require a significant computational cost to approximate the entire Pareto optimal solution set. In this paper, we propose a new branch and bound algorithm that approximates a part of the Pareto optimal solution set by introducing the additional preference information in the form of ordering cones. The basic idea is to replace the Pareto dominance induced by the nonnegative orthant with the cone dominance induced by a larger ordering cone in the discarding test. In particular, we consider both polyhedral and non-polyhedral cones, and propose the corresponding cone dominance-based discarding tests, respectively. In this way, the subboxes that do not contain efficient solutions with respect to the ordering cone will be removed, even though they may contain Pareto optimal solutions. We prove the global convergence of the proposed algorithm. Finally, the proposed algorithm is applied to a number of test instances as well as to 2- to 5-objective real-world constrained problems.

\keywords{Multiobjective optimization \and Global optimization \and Branch and bound algorithm \and Ordering cone \and Efficient solution}
\subclass{90C26\and 90C29\and 90C30}
\end{abstract}

\section{Introduction}
Multiobjective optimization problems (MOPs) are of growing interest in both mathematical optimization theory and real-world applications. In these problems, multiple conflicting objectives must be considered simultaneously, making it impossible to find a single solution that is optimal for all objectives. Instead, a set of Pareto optimal (efficient) solutions can be identified, characterized by the fact that the improvement of one objective can only be achieved at the expense of the deterioration of at least one other objective. In the absence of prior preference information, none of the Pareto optimal solutions can be said to be inferior to others. Consequently, one of the major challenges in multiobjective optimization is to assist the decision maker in making trade-offs between multiple objectives and in identifying a Pareto optimal solution that is most satisfactory to he/she.

The a posteriori methods attempt to generate a set of well-distributed representative solutions of the entire Pareto optimal solution set. The decision maker then has to examine this potential set of alternative solutions and make a choice. The majority of existing branch and bound algorithms for MOPs, as referenced in \cite{ref10,ref11,ref26,ref42,ref43,ref55,ref56,ref57,ref68,ref69}, can be classified as a posteriori. However, the solution processes of these algorithms are considered to be resource-intensive and time-consuming. This is because high-precision and well-distributed solutions can only be obtained by continuously subdividing the variable space, and the number of subboxes may increase exponentially during the subdivision (branching) process. Furthermore, the final decision relies heavily on the image of the Pareto optimal solution set, commonly known as the Pareto front. In the case of problems with four or more objectives, not only is the visualization of the Pareto front not intuitive, but also the number of representative solutions can be huge. In this case, even if these branch and bound algorithms successfully approximate the complete Pareto optimal solution set, it is challenging for the decision maker to make a choice.

One method for reducing the computational cost and alleviating decision pressure is to incorporate additional preference information into the branch and bound process. For instance, Wu and Yang \cite{ref44} employed preference information expressed through reference points to direct the search towards the corresponding regions of interest, thus circumventing the exhaustive search for the entire Pareto optimal solution set. Eichfelder and Stein \cite{ref60} introduced a bounded trade-off to truncate all subboxes in the solution process, thus greatly reducing the number of subboxes to be explored. It should be noted that in multiobjective optimization, a common way to model preferences is through ordering cones. For example, in \cite{ref61}, convex polyhedral ordering cones are used to model a decision maker's preferences based on trade-off information. In portfolio optimization \cite{ref62}, the dominance relationship between two portfolios is defined by the so-called ice cream cone, which is non-polyhedral. Therefore, ordering cones are essential for decision-making in many applications.

A well-known property of ordering cones is that the larger the ordering cone, the smaller the solution set obtained. In the context of branch and bound algorithms, which are a posteriori methods, the ordering cone corresponds to the nonnegative orthant of the objective space. Consequently, a portion of the Pareto optimal set can be approximated by using an ordering cone that is larger than the nonnegative orthant. In this paper, we propose a cone dominance-based branch and bound algorithm for MOPs. This algorithm is designed to approximate a portion of the Pareto optimal set rather than the entire set. The discarding test of the proposed algorithm employs a cone dominance relation induced by a general ordering cone, rather than the Pareto dominance relation, where the general ordering cone contains the nonnegative orthant. Therefore, the new discarding test is capable of excluding the subboxes which do not contain efficient solutions with respect to the corresponding cone. In particular, we construct a polyhedral cone to identify the $\epsilon$-properly Pareto optimal solutions \cite{ref38} whose trade-offs are bounded by $\epsilon$ and $1/\epsilon$. Furthermore, we consider non-polyhedral cones in three-dimensional or higher-dimensional objective space, and further discuss their properties and computation. We demonstrate the global convergence of the proposed algorithm. Finally, the algorithm is applied to several benchmark problems as well as to two- to five-objective real-world constrained optimization problems.

The rest of this paper is organized as follows. In Section 2, we introduce the basic concepts and notations which will be used in the sequel. The \emph{cone dominance-based branch and bound algorithm} and its theoretical analysis is described in Section 3. Section 4 is devoted to some numerical results.

\section{Preliminaries}
In this section we introduce the basic concepts which we need for the new algorithm. Let us have a pointed closed convex cone $\mathcal{C}$ defined in $\mathbb{R}^m$. The partial order $\leqq_\mathcal{C}$ induced by the cone $C$ is given by $y^1\leqq_\mathcal{C} y^2$ if and only if $y^2-y^1\in \mathcal{C}$, where $y^1$ and $y^2$ are two point in $\mathbb{R}^m$. Furthermore, $y^1\leq_\mathcal{C} y^2$ is written if and only if $y^2-y^1\in \mathcal{C}\backslash \{0\}$, and $y^1 <_\mathcal{C} y^2$ is written if and only if $y^2-y^1\in {\rm int}\mathcal{C}$, where ${\rm int}$ denotes the interior. For $\mathcal{C}=\mathbb{R}^m_+$, the abbreviated notation $y^1\leqq y^2$ is used if and only if $y^2-y^1\in \mathbb{R}^m_+$, and $y^1\leq y^2$ if and only if $y^2-y^1\in \mathbb{R}^m_+\backslash\{0\}$.

We study the following optimization problem:
\begin{align}\label{VP}
\mathcal{C}\hbox{-}\min\limits_{x\in\Omega}\quad F(x)=(f_1(x),\ldots ,f_m(x))^T
\end{align}
with
\begin{align*}
\Omega=\{x\in\mathbb{R}^n:g_j(x)\geq0,\;j=0,\ldots,p,\;\underline{x}_k\leq x_k\leq \overline{x}_k,\;k=0,\ldots,n\},
\end{align*}
where $f_i:\mathbb{R}^n\rightarrow \mathbb{R}$ ($i=1,\ldots,m$) are Lipschitz continuous, and $g_j:\mathbb{R}^n\rightarrow \mathbb{R}$ ($j=0,\ldots,p$) are continuous. If we allow $j=0$, the set $\Omega$ is referred to as a box constraint. In this case, we call $\Omega$ a \emph{box} with the midpoint $m(\Omega)=(\frac{\underline{x}_1+\overline{x}_1}{2},\ldots,\frac{\underline{x}_n+\overline{x}_n}{2})^T$ and the width $\omega(\Omega)=(\overline{x}_1-\underline{x}_1,\ldots,\overline{x}_n-\underline{x}_n)^T$. The diameter of $\Omega$ is denoted by $\|\omega(\Omega)\|$. For a feasible solution $x\in\Omega$, the objective vector $F(x)\in\mathbb{R}^m$ is said to be the image of $x$, while $x$ is called the preimage of $F(x)$. The notation ``$\mathcal{C}\hbox{-}\min$'' means that finding the minimum with respect to the cone $\mathcal{C}$.

The concept of cone dominance relation between two solutions $x^1,x^2\in\Omega$ can be defined as follows:
\begin{align*}
x^1\;\mathcal{C}\hbox{-}dominates\;x^2~\Longleftrightarrow\;F(x^1)\leq_\mathcal{C} F(x^2)\;\Longleftrightarrow\;F(x^2)-F(x^1)\in \mathcal{C}\backslash\{0\}.
\end{align*}

A point $x^*\in\Omega$ is \emph{efficient} of problem \eqref{VP} with respect to the cone $\mathcal{C}$ if there does not exist another $x\in\Omega$ such that $F(x)\leq_\mathcal{C} F(x^*)$. The set of all efficient solutions with respect to the cone $\mathcal{C}$ is denoted as $\mathcal{M}(F(\Omega),\mathcal{C})$. A nonempty set $\mathcal{U}(\mathcal{C})\subseteq\mathbb{R}^m$ is called a \emph{nondominated set with respect to $\mathcal{C}$} if for any $y^1,y^2\in \mathcal{U}(\mathcal{C})$ we have $y^1\nleqq_\mathcal{C} y^2$ and $y^2\nleqq_\mathcal{C} y^1$.

For $\mathcal{C}=\mathbb{R}^m_+$, the cone dominance is equivalent to Pareto dominance:
\begin{align*}
x^1\;dominates\;x^2~\Longleftrightarrow\;F(x^1)\leq F(x^2)\;\Longleftrightarrow\;F(x^2)-F(x^1)\in \mathbb{R}^m_+\backslash\{0\},
\end{align*}
and the efficient solutions are also known as the \emph{Pareto optimal solutions}. The set of all Pareto optimal solutions is called the \emph{Pareto optimal set}. The image of Pareto optimal set under the mapping $F$ is called the \emph{Pareto front}.

The aim of an approximation algorithm is used to found an $\varepsilon$-efficient solution, which is defined next. 

\begin{definition}\cite{ref63}
For given $\varepsilon\in\mathcal{C}\backslash\{0\}$. A point $\bar{x}\in \Omega$ is an $\varepsilon$-efficient solution with respect to the cone $\mathcal{C}$ if there does not exist another $x\in \Omega$ with $F(x)\leqq_{\mathcal{C}} F(\bar{x})-\varepsilon$.
\end{definition}

We use $d(a,b)=\|a-b\|$ to quantify the distance between two points $a$ and $b$, where $\|\cdot\|$ denotes the Euclidean norm. The distance between the point $a$ and a nonempty finite set $B$ is defined as $d(a,B):=\min_{b\in B}\|a-b\|.$ Let $A$ be another non-empty finite set, we define the Hausdorff distance between $A$ and $B$ by
\begin{align*}
d_H(A,B):=\max\{d_h(A,B),d_h(B,A)\},
\end{align*}
where $d_h(A,B)$ is the directed Hausdorff distance from $A$ to $B$, defined by
\begin{align*}
d_h(A,B):=\max_{a\in A}\{\min_{b\in B}\|a-b\|\}.
\end{align*}

Branch and bound algorithms \cite{ref10,ref11,ref26,ref42,ref43,ref55,ref56,ref57} have been employed to solve multiobjective optimization problems. By means of a tree search, a branch and bound algorithm systematically searches for an approximation of the entire Pareto optimal set. The basic branch and bound algorithm for MOPs was initially proposed by Fern{\'a}ndez and T{\'o}th \cite{ref11} (see Algorithm \ref{alg1}). The solution process is comprised of three components:
\begin{itemize}
  \item \emph{branching}: subboxes are bisected perpendicularly to the direction of maximum width;
  \item \emph{bounding}: the lower and upper bounds for subboxes are calculated;
  \item \emph{pruning}: the subboxes that are provably suboptimal are excluded from exploration.
\end{itemize}

\begin{algorithm}[H]\label{alg1}
  \SetKwInOut{Input}{Input}\SetKwInOut{Output}{Output}
  \Input{an MOP, termination criterion;}
  \Output{$\mathcal{B}_{k}$, $\mathcal{U}(\mathbb{R}^m_+)$;}
  \BlankLine
  $\mathcal{B}_0\leftarrow \Omega$, $\mathcal{U}^{nds}\leftarrow \emptyset$, $k=0$\;
  \While{termination criterion is not satisfied}
  {$\mathcal{B}_{k+1}\leftarrow \emptyset$\;
  \While{$\mathcal{B}_k\neq\emptyset$}{
  Select $B\in\mathcal{B}_k$ and remove it from $\mathcal{B}_k$\;
  $B_1,B_2\longleftarrow$ Bisect $B$ perpendicularly to the direction of maximum width\;
  \For{$i=1,2$}
  {Calculate the lower bound $l(B_i)$ and upper bound $u(B_i)$ for $B_i$\;
   \If{$B_i$ can not be discarded}{
        Update $\mathcal{U}(\mathbb{R}^m_+)$ by $u(B_i)$ and store $B_i$ into $\mathcal{B}_{k+1}$\;}}
  }
  $k\leftarrow k+1$.}
  \caption{A basic branch and bound algorithm}
\end{algorithm}
\bigskip

The upper bound of a subbox $B$ may be defined as the image of any feasible point in $B$. In practice, the midpoint or the vertices of $B$ are typically selected. The approaches for the lower bounds proposed so far in the literature include the natural interval extension \cite{ref11,ref25}, the Lipschitz bound \cite{ref42,ref43} and the $\alpha$BB method \cite{ref26}, and the resulting lower bound $l=(l_1,\ldots,l_m)^T$ of $B$ satisfies
\begin{align}
  l\leq F(x),\quad  x\in B.\label{IE:2.2}
\end{align}
Numerical experiments indicate that there is no significant difference between the three bounding approaches. However, the latter two calculate the maximal error between the lower bounds and optimal values.

The pruning can be achieved by \emph{discarding tests}. A discarding test is capable of limiting the tree search, thereby avoiding exhaustive enumeration. A common type of discarding test is based on the Pareto dominance:

\emph{A subbox will be discarded if there exists a feasible objective vector such that the objective vector dominates the lower bound of the subbox.}

It is evident that the subbox is removed because it does not contain any Pareto optimal solutions.

\section{Cone dominance-based branch and bound algorithm}
In this section, we propose the cone dominance-based branch and bound algorithm. As mentioned earlier, we will consider general ordering cones that are larger than the nonnegative orthant in the objective space. Thus, we have the following lemma about the resulting partial orderings.
\begin{lemma}\label{le:1}
Let $\mathcal{C}$ be a pointed closed convex cone in $\mathbb{R}^m$ and satisfy $\mathcal{C}\supseteq \mathbb{R}^m_+$. Let $\leqq_\mathcal{C}$ and $\leq_\mathcal{C}$ be the partial orderings characterized by $\mathcal{C}$. For $y^1,y^2\in\mathbb{R}^m$, if $y^1\leqq(\leq) y^2$, we have $y^1\leqq_\mathcal{C}(\leq_\mathcal{C})y^2$.
\end{lemma}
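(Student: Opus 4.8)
The plan is to reduce both implications to the single inclusion $\mathbb{R}^m_+\subseteq\mathcal{C}$ supplied by the hypothesis, after unfolding the definitions of the orderings from Section 2. Recall that $y^1\leqq y^2$ means $y^2-y^1\in\mathbb{R}^m_+$, while $y^1\leqq_\mathcal{C} y^2$ means $y^2-y^1\in\mathcal{C}$; similarly $y^1\leq y^2$ means $y^2-y^1\in\mathbb{R}^m_+\backslash\{0\}$ and $y^1\leq_\mathcal{C} y^2$ means $y^2-y^1\in\mathcal{C}\backslash\{0\}$. Thus the statement is entirely about the membership of the difference vector $d:=y^2-y^1$ in the relevant cone, and the proof is a direct set-inclusion argument.

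First I would treat the non-strict case. Assuming $y^1\leqq y^2$, the difference satisfies $d\in\mathbb{R}^m_+$; since $\mathbb{R}^m_+\subseteq\mathcal{C}$ by hypothesis, we immediately obtain $d\in\mathcal{C}$, i.e.\ $y^1\leqq_\mathcal{C} y^2$. For the strict case I would assume $y^1\leq y^2$, so that $d\in\mathbb{R}^m_+\backslash\{0\}$, and then verify the refined inclusion $\mathbb{R}^m_+\backslash\{0\}\subseteq\mathcal{C}\backslash\{0\}$: if $d\in\mathbb{R}^m_+\backslash\{0\}$ then $d\in\mathbb{R}^m_+\subseteq\mathcal{C}$ while simultaneously $d\neq 0$, hence $d\in\mathcal{C}\backslash\{0\}$, which is exactly $y^1\leq_\mathcal{C} y^2$.

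Since the whole argument rests on a set inclusion, there is no substantive obstacle; the only bookkeeping is ensuring that removing the origin is compatible with passing from $\mathbb{R}^m_+$ to the larger cone in the strict case, which it is because $0\notin\mathbb{R}^m_+\backslash\{0\}$ to begin with. I would also remark that pointedness, closedness, and convexity of $\mathcal{C}$ are not actually invoked in this lemma—only $\mathbb{R}^m_+\subseteq\mathcal{C}$ is used—these structural assumptions being reserved for the later analysis of the discarding test and convergence rather than for this monotonicity of the induced orderings.
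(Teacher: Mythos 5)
Your proof is correct and is essentially the argument the paper intends: the paper's own proof is a one-line appeal to the definitions, and your explicit unfolding into the set inclusions $\mathbb{R}^m_+\subseteq\mathcal{C}$ and $\mathbb{R}^m_+\backslash\{0\}\subseteq\mathcal{C}\backslash\{0\}$ is exactly what that appeal amounts to. Your closing remark that pointedness, closedness, and convexity are not needed here is also accurate.
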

\begin{proof}
This conclusion is derived from the definition of Pareto dominance. \qed
\end{proof}
\subsection{Polyhedral ordering cone}
The polyhedral ordering cones are defined as follows:
\begin{definition}\label{de:2}\cite{ref63}
A set $\mathcal{C}\subseteq\mathbb{R}^m$ is a polyhedral cone if there exists a matrix $M\in\mathbb{R}^{s\times m}$ such that
$\mathcal{C}=\{y\in\mathbb{R}^m:My\geqq 0\}$. The kernel of a polyhedral cone is defined as the kernel (or nullspace) of the associated matrix, $\hbox{Ker}\,\mathcal{C}:=\hbox{Ker}\,M=\{y\in\mathbb{R}^m:My=0\}$.
\end{definition}

For $0\leq\epsilon<1$, now we define a linear mapping $\mathcal{T}_{\epsilon}:\mathbb{R}^m\rightarrow\mathbb{R}^m$,
\begin{align*}
  \mathcal{T}_{\epsilon}(y):=\begin{bmatrix}
1\quad & \quad \epsilon \quad &\quad \cdots\quad&\quad \epsilon\\
\epsilon\quad&\quad1\quad&\quad\cdots\quad&\quad\epsilon\\
\vdots\quad & \quad\vdots\quad &\quad \ddots\quad &\quad \vdots\\
\epsilon\quad&\quad \epsilon\quad &\quad \cdots\quad &\quad 1
\end{bmatrix}\cdot y.
\end{align*}
Using this notation, we define a set
\begin{align*}
  \mathcal{C}_{\epsilon}:=\{y\in\mathbb{R}^m:\mathcal{T}_{\epsilon}(y)\geqq 0\}.
\end{align*}
By Definition \ref{de:2}, the set $\mathcal{C}_{\epsilon}$ is a polyhedral ordering cone, and thus the corresponding cone orderings $\leqq_{\mathcal{C}_{\epsilon}}$ and $\leq_{\mathcal{C}_{\epsilon}}$ in $\mathbb{R}^m$ can be characterized. The following lemmas hold for $\leqq_{\mathcal{C}_{\epsilon}}$ and $\leq_{\mathcal{C}_{\epsilon}}$.
\begin{lemma}\label{le:2}
For $y^1,y^2\in\mathbb{R}^m$, we have
\begin{align*}
y^1\leqq_{\mathcal{C}_{\epsilon}}(\leq_{\mathcal{C}_{\epsilon}})y^2\Leftrightarrow \mathcal{T}_{\epsilon}(y^1)\leqq(\leq)\mathcal{T}_{\epsilon}(y^2)
\end{align*}
\end{lemma}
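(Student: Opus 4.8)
The plan is to reduce both equivalences to the defining relation of $\mathcal{C}_\epsilon$ combined with the linearity of the map $\mathcal{T}_\epsilon$. First I would treat the nonstrict ordering. By definition $y^1\leqq_{\mathcal{C}_\epsilon}y^2$ means $y^2-y^1\in\mathcal{C}_\epsilon$, which by $\mathcal{C}_\epsilon=\{y\in\mathbb{R}^m:\mathcal{T}_\epsilon(y)\geqq 0\}$ is equivalent to $\mathcal{T}_\epsilon(y^2-y^1)\geqq 0$. Since $\mathcal{T}_\epsilon$ is a linear mapping, this equals $\mathcal{T}_\epsilon(y^2)-\mathcal{T}_\epsilon(y^1)\geqq 0$, that is, $\mathcal{T}_\epsilon(y^1)\leqq\mathcal{T}_\epsilon(y^2)$. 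Every step in this chain is a biconditional, so it establishes the first equivalence directly.

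For the strict ordering I would follow the same chain but carry along the nonzero condition. Here $y^1\leq_{\mathcal{C}_\epsilon}y^2$ means $y^2-y^1\in\mathcal{C}_\epsilon\backslash\{0\}$, i.e.\ $\mathcal{T}_\epsilon(y^2-y^1)\geqq 0$ together with $y^2-y^1\neq 0$; whereas $\mathcal{T}_\epsilon(y^1)\leq\mathcal{T}_\epsilon(y^2)$ means $\mathcal{T}_\epsilon(y^2-y^1)\geqq 0$ together with $\mathcal{T}_\epsilon(y^2-y^1)\neq 0$. The two statements already agree on the first condition via the nonstrict case, so the only remaining point is to verify that $y^2-y^1\neq 0$ is equivalent to $\mathcal{T}_\epsilon(y^2-y^1)\neq 0$.

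The main obstacle is therefore the injectivity of $\mathcal{T}_\epsilon$ for $0\leq\epsilon<1$, which is exactly what allows the nonzero conditions to transfer. I would establish this by writing the defining matrix as $(1-\epsilon)I+\epsilon J$, where $J$ is the $m\times m$ all-ones matrix, and computing its spectrum: the eigenvalue $1+(m-1)\epsilon$ on the span of $(1,\ldots,1)^T$, and the eigenvalue $1-\epsilon$ of multiplicity $m-1$ on the orthogonal complement. For $0\leq\epsilon<1$ both eigenvalues are strictly positive, so the matrix is invertible (indeed positive definite) and $\mathcal{T}_\epsilon$ is injective. Consequently $\mathcal{T}_\epsilon(y^2-y^1)=0$ forces $y^2-y^1=0$, which closes the equivalence of the nonzero conditions and completes the strict case. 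I expect the only genuinely nonroutine ingredient to be this invertibility argument; the rest is a direct unwinding of definitions.
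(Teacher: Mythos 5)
Your proof is correct and follows essentially the same route as the paper: unwind the definition of $\mathcal{C}_\epsilon$, use linearity of $\mathcal{T}_\epsilon$ for the nonstrict case, and reduce the strict case to the triviality of $\mathrm{Ker}\,\mathcal{C}_\epsilon$. The only difference is that the paper dismisses $\mathrm{Ker}\,\mathcal{C}_\epsilon=\{0\}$ as "easy to see," whereas you justify it properly via the spectrum of $(1-\epsilon)I+\epsilon J$, which is a welcome addition rather than a deviation.
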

\begin{proof}
By the definition of $\leqq_{\mathcal{C}_{\epsilon}}$ and $\mathcal{C}_{\epsilon}$ and by the linearity of $\mathcal{T}_{\epsilon}$, we have
\begin{align*}
  y^1\leqq_{\mathcal{C}_{\epsilon}}y^2\Leftrightarrow y^2-y^1\in\mathcal{C}_{\epsilon}\Leftrightarrow \mathcal{T}_{\epsilon}(y^2-y^1)\geqq 0\Leftrightarrow \mathcal{T}_{\epsilon}(y^2)\geqq \mathcal{T}_{\epsilon}(y^1).
\end{align*}
Furthermore, it is easy to see that Ker~$\mathcal{C}_{\epsilon}=\{0\}$, it follows that $y^1\leq_{\mathcal{C}_{\epsilon}}y^2\Leftrightarrow \mathcal{T}_{\epsilon}(y^2)\geq \mathcal{T}_{\epsilon}(y^1).$ \qed
\end{proof}

\begin{lemma}\label{le:3}
The cone ordering $\leq_{\mathcal{C}_{\epsilon}}$ is a strict partial ordering.
\end{lemma}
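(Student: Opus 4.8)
The plan is to verify directly the two defining properties of a strict partial ordering, namely irreflexivity and transitivity (asymmetry then follows automatically). The cleanest route is to transport everything through Lemma~\ref{le:2}, which identifies $y^1\leq_{\mathcal{C}_{\epsilon}}y^2$ with $\mathcal{T}_{\epsilon}(y^1)\leq\mathcal{T}_{\epsilon}(y^2)$. This reduces each property of $\leq_{\mathcal{C}_{\epsilon}}$ to the corresponding well-known property of the componentwise strict order $\leq$ on $\mathbb{R}^m$ induced by $\mathbb{R}^m_+\backslash\{0\}$, so the verification becomes essentially mechanical.

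First I would handle irreflexivity: for any $y\in\mathbb{R}^m$, the relation $y\leq_{\mathcal{C}_{\epsilon}}y$ would force $\mathcal{T}_{\epsilon}(y)\leq\mathcal{T}_{\epsilon}(y)$ by Lemma~\ref{le:2}, i.e. $0\in\mathbb{R}^m_+\backslash\{0\}$, which is impossible; hence $y\not\leq_{\mathcal{C}_{\epsilon}}y$. Next, for transitivity, I would take $y^1\leq_{\mathcal{C}_{\epsilon}}y^2$ and $y^2\leq_{\mathcal{C}_{\epsilon}}y^3$; Lemma~\ref{le:2} then yields $\mathcal{T}_{\epsilon}(y^1)\leq\mathcal{T}_{\epsilon}(y^2)$ and $\mathcal{T}_{\epsilon}(y^2)\leq\mathcal{T}_{\epsilon}(y^3)$. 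Since the standard strict order $\leq$ is transitive, $\mathcal{T}_{\epsilon}(y^1)\leq\mathcal{T}_{\epsilon}(y^3)$, and applying the reverse implication of Lemma~\ref{le:2} gives $y^1\leq_{\mathcal{C}_{\epsilon}}y^3$.

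The only point requiring care, which I regard as the main (though modest) obstacle, is the treatment of the strict, ``nonzero'' part of the relation. Transitivity of the componentwise $\leq$ is not merely the statement that $\mathbb{R}^m_+$ is closed under addition; one must separately confirm that the resulting difference is nonzero. This is where the injectivity of $\mathcal{T}_{\epsilon}$ (equivalently $\mathrm{Ker}\,\mathcal{C}_{\epsilon}=\{0\}$, already established in the proof of Lemma~\ref{le:2}) and the pointedness of $\mathbb{R}^m_+$ are used: if $a\leqq b$, $b\leqq c$ componentwise with $b-a$ having a strictly positive entry, then $c-a$ inherits that entry, so $c-a\in\mathbb{R}^m_+\backslash\{0\}$. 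As an alternative I could argue intrinsically, bypassing Lemma~\ref{le:2} and using only that $\mathcal{C}_{\epsilon}$ is a pointed convex cone, so that $\mathcal{C}_{\epsilon}+\mathcal{C}_{\epsilon}\subseteq\mathcal{C}_{\epsilon}$ gives the cone membership and $\mathcal{C}_{\epsilon}\cap(-\mathcal{C}_{\epsilon})=\{0\}$ rules out the sum being zero; however, I would present the Lemma~\ref{le:2} version as the primary argument since it renders the whole proof a one-line reduction to a standard fact.
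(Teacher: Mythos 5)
Your proposal is correct and follows essentially the same route as the paper: both reduce irreflexivity and transitivity of $\leq_{\mathcal{C}_{\epsilon}}$ to the corresponding properties of the componentwise order via Lemma~\ref{le:2}. Your extra care about the ``nonzero'' part of the strict relation (using $\mathrm{Ker}\,\mathcal{T}_{\epsilon}=\{0\}$) only makes explicit a point the paper leaves implicit.
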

\begin{proof}
By Definition 2.3.1 in \cite{ref58}, we would like to show that $\leq_{\mathcal{C}_{\epsilon}}$ is irreflexive and transitive. First, it is easy to see that for all $y\in \mathbb{R}^m$, $y\nleq_{\mathcal{C}_{\epsilon}} y$, i.e., $\leq_{\mathcal{C}_{\epsilon}}$ is irreflexive. Next we will prove $\leq_{\mathcal{C}_{\epsilon}}$ is transitive. For $y^1,y^2,y^3\in\mathbb{R}^m$, assume $y^1\leq_{\mathcal{C}_{\epsilon}} y^2$ and $y^2\leq_{\mathcal{C}_{\epsilon}} y^3$. By Lemma \ref{le:2}, we have
$\mathcal{T}_{\epsilon}(y^1)\leq\mathcal{T}_{\epsilon}(y^2)$ and $\mathcal{T}_{\epsilon}(y^2)\leq\mathcal{T}_{\epsilon}(y^3)$. Due to the transitivity of $\leq$, we have $\mathcal{T}_{\epsilon}(y^1)\leq\mathcal{T}_{\epsilon}(y^3)$, meaning that $y^1\leq_{\mathcal{C}_{\epsilon}} y^3$. \qed
\end{proof}


According to above lemmas, the $\mathcal{C}_{\epsilon}$-dominance relation between $x^1,x^2\in\Omega$ can be defined as follows:
\begin{align*}
x^1~\mathcal{C}_{\epsilon}\hbox{-}dominates~x^2\Longleftrightarrow F(x^1)\leq_{\mathcal{C}_{\epsilon}} F(x^2)\Longleftrightarrow\mathcal{T}_{\epsilon}(F(x^1))\leq\mathcal{T}_{\epsilon}(F(x^2)).
\end{align*}

Here we consider the $\epsilon$-properly Pareto optimal solution proposed by Wierzbicki \cite{ref38}:

\begin{definition}\label{de:1}
The solution $x^*\in\Omega$ is said to be the $\epsilon$-properly Pareto optimal solution of problem \eqref{VP}, if
\begin{align*}
(F(x^*)-\mathbb{R}^m_{\epsilon}\backslash\{0\})\cap F(\Omega)=\emptyset,
\end{align*}
where $\mathbb{R}^m_{\epsilon}=\{y\in\mathbb{R}^m:\min_{i=1,\ldots,m} (1-\epsilon)y_i+\epsilon\sum_{i=1}^{m}y_i\geq 0\}$, $0\leq\epsilon<1$.
\end{definition}

Figure \ref{fig2} depicts the $\epsilon$-properly Pareto optimal solution of the bi-objective optimization problem. The $\epsilon$-properly Pareto optimal solution can be obtained by intersecting the feasible region with a blunt cone $\mathbb{R}^m_{\epsilon}$. Compared to the Pareto optimal solution, the $\epsilon$-properly Pareto optimal solution uses a larger set $\mathbb{R}^m_{\epsilon}$ instead of $\mathbb{R}^m_+$, so the $\epsilon$-properly Pareto optimal solution set is contained in the Pareto optimal solution set. Furthermore, an interesting aspect of $\epsilon$-properly Pareto optimal solutions is that the trade-offs are bounded by $\epsilon$ and $1/\epsilon$ \cite{ref23,ref59}.


\begin{figure}[h]%
\centering
\includegraphics[width=0.5\textwidth]{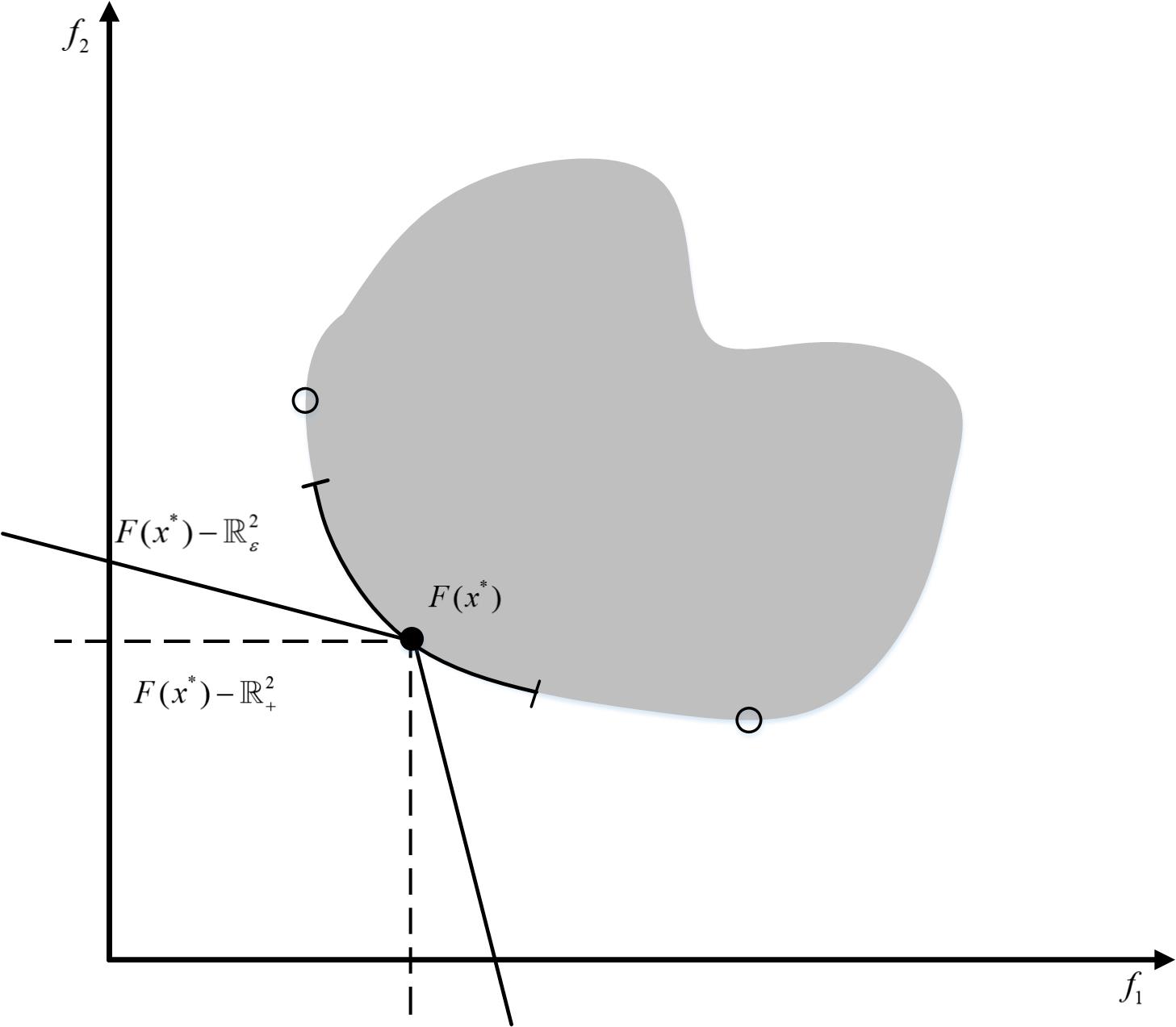}
\caption{$\epsilon$-properly Pareto optimal solution}\label{fig2}
\end{figure}

The following theorem discuss the relationship between $\mathcal{C}_{\epsilon}$-dominance and $\epsilon$-properly Pareto optimal solutions.

\begin{theorem}\label{th:1}
   For $\epsilon\in[0,1)$, we have $x$ is an $\epsilon$-properly Pareto optimal solution if and only if there does not exist $x'\in\Omega$, such that $\mathcal{T}_{\epsilon}(F(x'))\leq \mathcal{T}_{\epsilon}(F(x))$.
\end{theorem}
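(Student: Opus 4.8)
The plan is to reduce the statement to a single observation: the set $\mathbb{R}^m_{\epsilon}$ appearing in Definition \ref{de:1} is \emph{exactly} the polyhedral cone $\mathcal{C}_{\epsilon}$. Once this identification is in hand, the claimed equivalence is just a translation of the definition of an $\epsilon$-properly Pareto optimal solution through Lemma \ref{le:2}. First I would compute the $i$-th component of $\mathcal{T}_{\epsilon}(y)$ directly from the matrix: since row $i$ carries a $1$ in position $i$ and $\epsilon$ elsewhere,
\[
(\mathcal{T}_{\epsilon}(y))_i = y_i + \epsilon\sum_{j\neq i} y_j = (1-\epsilon)y_i + \epsilon\sum_{j=1}^{m} y_j .
\]
Hence the membership condition $y\in\mathbb{R}^m_{\epsilon}$, namely $\min_{i}\big((1-\epsilon)y_i+\epsilon\sum_j y_j\big)\geq 0$, holds if and only if every component $(\mathcal{T}_{\epsilon}(y))_i\geq 0$, that is $\mathcal{T}_{\epsilon}(y)\geqq 0$, which is precisely $y\in\mathcal{C}_{\epsilon}$. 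This proves $\mathbb{R}^m_{\epsilon}=\mathcal{C}_{\epsilon}$.

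Next I would unfold Definition \ref{de:1}. The condition $(F(x)-(\mathbb{R}^m_{\epsilon}\backslash\{0\}))\cap F(\Omega)=\emptyset$ states exactly that there is no $x'\in\Omega$ with $F(x)-F(x')\in\mathbb{R}^m_{\epsilon}\backslash\{0\}$. Substituting $\mathbb{R}^m_{\epsilon}=\mathcal{C}_{\epsilon}$, this becomes the assertion that no $x'\in\Omega$ satisfies $F(x')\leq_{\mathcal{C}_{\epsilon}} F(x)$, i.e.\ that $x$ is efficient with respect to $\mathcal{C}_{\epsilon}$. Finally, Lemma \ref{le:2} converts $F(x')\leq_{\mathcal{C}_{\epsilon}} F(x)$ into the equivalent inequality $\mathcal{T}_{\epsilon}(F(x'))\leq \mathcal{T}_{\epsilon}(F(x))$. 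Chaining these equivalences yields both implications of the theorem simultaneously, so there is no need to treat the two directions separately.

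There is no genuinely hard step here; the whole argument is a chain of definitional equivalences, and the only substantive content is the component computation identifying the two cones. The single point that requires care is matching the ``$\backslash\{0\}$'' in Definition \ref{de:1} with the \emph{strict} relation $\leq_{\mathcal{C}_{\epsilon}}$ rather than $\leqq_{\mathcal{C}_{\epsilon}}$. For this I would invoke the fact already recorded in the proof of Lemma \ref{le:2}, that $\mathrm{Ker}\,\mathcal{C}_{\epsilon}=\{0\}$, so that $F(x)-F(x')\in\mathcal{C}_{\epsilon}\backslash\{0\}$ corresponds exactly to $F(x')\leq_{\mathcal{C}_{\epsilon}} F(x)$ and to $\mathcal{T}_{\epsilon}(F(x'))\leq\mathcal{T}_{\epsilon}(F(x))$. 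I would also note that the hypothesis $0\leq\epsilon<1$ is what guarantees $\mathcal{T}_{\epsilon}$ is invertible (equivalently, that $\mathcal{C}_{\epsilon}$ is pointed with trivial kernel), which is precisely the condition under which Lemma \ref{le:2} applies.
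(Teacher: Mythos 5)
Your proof is correct and follows essentially the same route as the paper: identify $\mathbb{R}^m_{\epsilon}$ with $\mathcal{C}_{\epsilon}$ and then translate Definition \ref{de:1} through Lemma \ref{le:2}. The only difference is cosmetic --- you present it as a single chain of equivalences (and spell out the componentwise computation behind $\mathbb{R}^m_{\epsilon}=\mathcal{C}_{\epsilon}$, which the paper dismisses as ``easy to see''), whereas the paper argues the two directions separately by contradiction.
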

\begin{proof}
    First of all, It is easy to see that $\mathbb{R}^m_{\epsilon}=\mathcal{C}_{\epsilon}$.

    ($\Rightarrow$) Assume that $x$ is an $\epsilon$-properly Pareto optimal solution. Suppose, to the contrary, that there exists $x'\in \Omega$, such that $F(x')\leq_{\mathcal{C}_{\epsilon}} F(x)$. By Lemma \ref{le:2}, we have
\begin{align*}
  F(x')\leq_{\mathcal{C}_{\epsilon}} F(x)\Leftrightarrow \mathcal{T}_{\epsilon}(F(x'))\leq\mathcal{T}_{\epsilon}(F(x))\Leftrightarrow F(x)-F(x')\in\mathcal{C}_{\epsilon}\backslash\{0\},
\end{align*}
which is a contradiction to the fact that $x$ is an $\epsilon$-properly Pareto optimal solution.

($\Leftarrow$) Assume that for $x\in\Omega$, there does not exist $x'\in\Omega$, such that $\mathcal{T}_{\epsilon}(F(x'))\leq \mathcal{T}_{\epsilon}(F(x))$. To the contrary, suppose that $x$ is not an $\epsilon$-properly Pareto optimal solution. According to Definition \ref{de:1}, we know that there exists $\bar{x}\in\Omega$, such that $F(x)\in F(\bar{x})+\mathcal{C}_{\epsilon}\backslash\{0\}$. By Lemma \ref{le:2} and the linearity of $\mathcal{T}_{\epsilon}$, we obtain a contradiction $\mathcal{T}_{\epsilon}(F(\bar{x}))\leq \mathcal{T}_{\epsilon}(F(x))$. \qed
\end{proof}

Now we can propose the discarding test corresponding to $\mathcal{C}_{\epsilon}$.

\noindent{\bf $\mathcal{C}_{\epsilon}$-dominance-based discarding test} \emph{Let problem \eqref{VP} be given, let $B$ be a subbox and $l(B)$ its lower bound. For $\epsilon\in[0,1)$, if there exists a feasible objective vector $u\in F(\Omega)$, such that $u\leqq_{\mathcal{C}_{\epsilon}} l(B)$, then $B$ will be discarded.}

The correctness of the proposed discarding test is stated next.

\begin{theorem}\label{th:2}
Let a subbox $B\in\Omega$ and its lower bound $l(B)\in\mathbb{R}^m$ be given. For $\epsilon\in[0,1)$, let $\mathcal{U}(\mathcal{C}_{\epsilon})$ be a nondominated upper bound set of problem \eqref{VP} with respect to $\mathcal{C}_{\epsilon}$. If there exists an upper bound $u\in\mathcal{U}(\mathcal{C}_{\epsilon})$ such that $u\leqq_{\mathcal{C}_{\epsilon}} l(B)$, then $B$ does not contain $\epsilon$-properly Pareto optimal solution of problem \eqref{VP}.
\end{theorem}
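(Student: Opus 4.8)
The plan is to reduce the claim to Theorem \ref{th:1} via the key inclusion \eqref{IE:2.2} for the lower bound, using the monotonicity of $\leqq_{\mathcal{C}_\epsilon}$ with respect to $\leqq$ established in Lemma \ref{le:1}. The goal is to show that every feasible point $x\in B$ is $\mathcal{C}_\epsilon$-dominated by the preimage of $u$, so that by the characterization in Theorem \ref{th:1} none of them can be $\epsilon$-properly Pareto optimal.

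First I would fix an arbitrary feasible point $x\in B$ and recall that the lower bound satisfies $l(B)\leqq F(x)$ by \eqref{IE:2.2}. By Lemma \ref{le:1} (with the cone $\mathcal{C}_\epsilon\supseteq\mathbb{R}^m_+$, since $\epsilon\in[0,1)$ makes $\mathcal{C}_\epsilon$ a pointed closed convex cone containing the nonnegative orthant) this Pareto inequality upgrades to the cone inequality $l(B)\leqq_{\mathcal{C}_\epsilon} F(x)$. Combining this with the hypothesis $u\leqq_{\mathcal{C}_\epsilon} l(B)$ and invoking transitivity of $\leqq_{\mathcal{C}_\epsilon}$ (which follows from $\mathcal{C}_\epsilon$ being a convex cone, equivalently from Lemma \ref{le:2} together with the transitivity of $\leqq$) yields $u\leqq_{\mathcal{C}_\epsilon} F(x)$.

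Next I would let $\bar{x}\in\Omega$ denote a preimage of $u$, i.e. $u=F(\bar{x})$, which exists because $u\in\mathcal{U}(\mathcal{C}_\epsilon)$ is an upper bound realized by a feasible objective vector. Then $F(\bar{x})\leqq_{\mathcal{C}_\epsilon} F(x)$, and by Lemma \ref{le:2} this is equivalent to $\mathcal{T}_\epsilon(F(\bar{x}))\leqq\mathcal{T}_\epsilon(F(x))$. To apply Theorem \ref{th:1} I need the strict relation $\leq$ rather than $\leqq$; the step that deserves care is ruling out the degenerate case $F(\bar{x})=F(x)$. The cleanest way is to observe that if $x$ were $\epsilon$-properly Pareto optimal, then applying Theorem \ref{th:1} to $x$ forbids the existence of any $x'\in\Omega$ with $\mathcal{T}_\epsilon(F(x'))\leq\mathcal{T}_\epsilon(F(x))$; taking $x'=\bar{x}$ we would need $\mathcal{T}_\epsilon(F(\bar{x}))=\mathcal{T}_\epsilon(F(x))$, hence $F(\bar{x})=F(x)$ since $\mathcal{T}_\epsilon$ is invertible (its kernel is trivial by Lemma \ref{le:2}), and then $u=F(x)$ for every such $x\in B$.

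The main obstacle, and the place I would spend the most effort, is this equality case: I must argue that $B$ still contains no $\epsilon$-properly Pareto optimal solution even when $u=l(B)=F(x)$ on the boundary. I expect the resolution to come from the structure of the nondominated set $\mathcal{U}(\mathcal{C}_\epsilon)$ and the discarding logic—if $u\in\mathcal{U}(\mathcal{C}_\epsilon)$ is already recorded as an upper bound attained at some feasible $\bar{x}$, then $\bar{x}$ (lying outside $B$, or representing the already-stored solution) serves as the dominating witness, so discarding $B$ loses no $\epsilon$-properly Pareto optimal point that is not already represented. I would therefore phrase the conclusion as: every $x\in B$ fails the criterion of Theorem \ref{th:1} via the witness $\bar{x}$, whence $B\cap\{\epsilon\text{-properly Pareto optimal solutions}\}=\emptyset$. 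Once the equality case is handled, the remaining steps are the routine chain of equivalences above.
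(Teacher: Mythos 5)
Your overall route coincides with the paper's: chain the hypothesis $u\leqq_{\mathcal{C}_\epsilon}l(B)$ with the lower-bound property \eqref{IE:2.2}, upgrade the componentwise relation to the cone relation via Lemma \ref{le:1}, use transitivity, and conclude through the characterization in Theorem \ref{th:1} with a preimage of $u$ as the witness. The genuine gap is exactly in the place you flag yourself: the equality case. It arises only because you read \eqref{IE:2.2} as the weak relation $l(B)\leqq F(x)$, whereas the paper states it with the strict relation $l\leq F(x)$, i.e.\ $F(x)-l(B)\in\mathbb{R}^m_+\setminus\{0\}$ for every $x\in B$. With the strict form the degenerate case never occurs: if $u=l(B)$, then $u\leq F(x)$ directly and Lemma \ref{le:1} gives $u\leq_{\mathcal{C}_\epsilon}F(x)$; if instead $u\leq_{\mathcal{C}_\epsilon}l(B)$, then combining with $l(B)\leq_{\mathcal{C}_\epsilon}F(x)$ and the transitivity of the strict ordering (Lemma \ref{le:3}) again yields $u\leq_{\mathcal{C}_\epsilon}F(x)$. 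In either case the relation is strict, so $u\neq F(x)$, and Theorem \ref{th:1} applies immediately; this two-case argument is precisely the paper's proof.

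By contrast, your proposed resolution of the equality case does not prove the stated theorem. You argue that discarding $B$ ``loses no $\epsilon$-properly Pareto optimal point that is not already represented'' by the stored solution attaining $u$; but the theorem asserts the stronger claim that $B$ contains \emph{no} $\epsilon$-properly Pareto optimal solution at all. If $u=F(x)$ for an $\epsilon$-properly Pareto optimal $x\in B$ were genuinely possible, the theorem would simply be false, and no appeal to the bookkeeping of $\mathcal{U}(\mathcal{C}_\epsilon)$ or to the discarding logic could repair that. The correct repair is the strictness of \eqref{IE:2.2} noted above; equivalently, since $\mathcal{C}_\epsilon$ is pointed and contains $\mathbb{R}^m_+$, one has $(\mathbb{R}^m_+\setminus\{0\})+\mathcal{C}_\epsilon\subseteq\mathcal{C}_\epsilon\setminus\{0\}$, so the composed relation $u\leqq_{\mathcal{C}_\epsilon}l(B)\leq F(x)$ is automatically strict. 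Once this is inserted, the rest of your argument is sound and matches the paper.
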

\begin{proof}
Let us assume that $x^*\in B$ is an $\epsilon$-properly Pareto solution of problem \eqref{VP}. According to \eqref{IE:2.2}, we know that $l(B)\leq F(x^*)$. If $u=l(B)$, then $u\leq F(x^*)$. By Lemma \ref{le:1}, we have $u\leq_{\mathcal{C}_{\epsilon}} F(x^*)$. If $u\leq_{\mathcal{C}_{\epsilon}}l(B)$, by Lemma \ref{le:3}, we then have $u\leq_{\mathcal{C}_{\epsilon}} l(B)\leq_{\mathcal{C}_{\epsilon}} F(x^*)$. Thus from Theorem \ref{th:1}, we obtain $x^*$ is not an $\epsilon$-properly Pareto solution.\qed
\end{proof}

Algorithm \ref{alg:2} gives an implementation of the $\mathcal{C}_{\epsilon}$-dominance-based discarding test, where the flag $D$ stands for decision to discard the subbox after the algorithm. Suppose that a nondominated upper bound set $\mathcal{U}(\mathcal{C}_{\epsilon})$ is known, and if there exists an upper bound $u\in\mathcal{U}(\mathcal{C}_{\epsilon})$ such that $\mathcal{T}_{\epsilon}(u)\leqq \mathcal{T}_{\epsilon}(l(B))$, which means that there dose not exist an $\epsilon$-properly Pareto solution in $B$ by Theorem \ref{th:2}, then the flag $D$ for $B$ is set to 1; otherwise, the flag $D$ is set to 0.

\begin{algorithm}
\caption{\texttt{$\mathcal{C}_{\epsilon}$-dominance-based-DiscardingTest($B,l(B),\mathcal{U},\mathcal{C}_{\epsilon}$)}}\label{alg:2}
    $D\leftarrow0$\;
    \ForEach{$u\in\mathcal{U}$}{
    \If{$\mathcal{T}_{\epsilon}(u)\leqq \mathcal{T}_{\epsilon}(l(B))$}
    {
    $D\leftarrow1$\;
    \textbf{break for-loop}
    }}
    \Return $D$
\end{algorithm}
\subsection{Non-polyhedral ordering cone}
In fact, not all practical problems consider polyhedral cones. For example, in portfolio optimization \cite{ref62}, the dominance structure in the three-dimensional portfolio space is defined by the so-called ice cream cone
\begin{align}
\mathcal{C}:=\{y=(y_1,y_2,y_3)^T:y_3\geq\sqrt{y_1^2+y_2^2}\},\label{E:3.1}
\end{align}
also called the second-order cone. This is a pointed closed convex cone which is non-polyhedral.

We denote the origin of the three-dimensional portfolio space as $O$, and let $z$ be a point on the $z$-axis. It is not difficult to see that the rotation axis of the ice cream cone is $\overrightarrow{Oz}$. For a given point $y\in\mathbb{R}^3$, the left side of the inequality in \eqref{E:3.1} represents the projection of the vector $\overrightarrow{Oy}$ onto $\overrightarrow{Oz}$. The right side of the inequality in \eqref{E:3.1} represents the distance from $y$ to the $z$-axis (the line where $\overrightarrow{Oz}$ is located). Thus, the geometric meaning of the inequality is that the angle between $\overrightarrow{Oy}$ and $\overrightarrow{Oz}$ is not greater than $\pi/4$. As a result, we can determine a unique ice cream cone from the direction and angle. If we extend to the more general case (arbitrary dimension $m\geq2$, direction $w\in\mathbb{R}^m\backslash\{0\}$ and angle $\theta\in(0,\pi/2)$), for two points $y^1,y^2\in\mathbb{R}^m$, we define
\begin{align*}
d_1(y^1,y^2):=\frac{(y^2-y^1)^T w}{\|w\|}~\hbox{and}~d_2(y^1,y^2)=:\|(y^2-y^1)-d_1(y^1,y^2)\frac{w}{\|w\|}\|,
\end{align*}
then a general ice cream cone in $\mathbb{R}^m$ can be expressed as:
\begin{align*}
\mathcal{C}_{(w,\theta)}=\{y\in\mathbb{R}^m:d_2(0,y)\leq d_1(0,y)\tan\theta\}.
\end{align*}
It is easy to see that $\mathcal{C}_{(w,\theta)}$ is a pointed closed convex cone, thus the corresponding cone orderings $\leq_{\mathcal{C}_{(w,\theta)}}$ and $\leqq_{\mathcal{C}_{(w,\theta)}}$ in $\mathbb{R}^m$ can be characterized. Then we have the following lemma.
\begin{lemma}\label{le:4}
For given $w\in\mathbb{R}^m\backslash\{0\}$ and $\theta\in(0,\pi/2)$, if $y\in\mathcal{C}_{(w,\theta)}$, then $d_1(0,y)\geq0$. Especially, $d_1(0,y)=0$ if and only if $y=0$.
\end{lemma}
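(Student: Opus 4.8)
The plan is to prove the two claims directly from the definition of the ice cream cone $\mathcal{C}_{(w,\theta)}$ and the defining quantities $d_1$ and $d_2$. Recall that $d_1(0,y)=\frac{y^Tw}{\|w\|}$ is the signed length of the projection of $y$ onto the direction $w$, and $d_2(0,y)=\|y-d_1(0,y)\frac{w}{\|w\|}\|\geq 0$ is the distance from $y$ to the axis spanned by $w$. The membership condition for the cone is $d_2(0,y)\leq d_1(0,y)\tan\theta$.

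For the first claim, I would argue by contradiction (or directly). Suppose $y\in\mathcal{C}_{(w,\theta)}$ but $d_1(0,y)<0$. Since $\theta\in(0,\pi/2)$ we have $\tan\theta>0$, so the right-hand side $d_1(0,y)\tan\theta$ is strictly negative. But $d_2(0,y)\geq 0$ as a norm, so the cone inequality $d_2(0,y)\leq d_1(0,y)\tan\theta$ would force $0\leq d_2(0,y)<0$, a contradiction. Hence $d_1(0,y)\geq 0$. This is the short, easy half.

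For the second claim, the ``if'' direction is immediate: if $y=0$ then $d_1(0,0)=\frac{0^Tw}{\|w\|}=0$. For the ``only if'' direction, assume $y\in\mathcal{C}_{(w,\theta)}$ with $d_1(0,y)=0$. Substituting into the cone inequality gives $d_2(0,y)\leq 0\cdot\tan\theta=0$, and combined with $d_2(0,y)\geq 0$ this yields $d_2(0,y)=0$. Since $d_2(0,y)=\|y-d_1(0,y)\frac{w}{\|w\|}\|=\|y-0\|=\|y\|$, the vanishing of the norm forces $y=0$. The key observation making this clean is that when $d_1(0,y)=0$ the projection term drops out, so $d_2(0,y)$ collapses to exactly $\|y\|$; I should state that simplification explicitly.

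I do not expect any genuine obstacle here, since both parts reduce to the sign of $\tan\theta$ on $(0,\pi/2)$ and the nonnegativity/definiteness of the Euclidean norm. The only point requiring a little care is to note at the outset that the claim implicitly concerns points $y$ lying in the cone (so that the defining inequality is available to be used); the nonnegativity of $d_1$ genuinely uses membership, whereas the ``if'' part of the second statement does not. I would keep the write-up to a few lines, emphasizing the reduction $d_2(0,y)=\|y\|$ when $d_1(0,y)=0$ as the one substantive step.
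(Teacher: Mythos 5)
Your argument is correct and matches the paper's proof in essence: both parts rest on the positivity of $\tan\theta$, the nonnegativity of $d_2(0,y)$ as a norm, and the observation that $d_1(0,y)=0$ collapses $d_2(0,y)$ to $\|y\|$ (the paper phrases the last step as a contradiction from $y\neq 0$, while you argue it directly, but this is only a cosmetic difference). No gaps.
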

\begin{proof}
The first conclusion is ensured by the definition of $\mathcal{C}_{(w,\theta)}$ and the fact $d_2(0,y)\geq0$. Next we prove $d_1(0,y)=0\Longleftrightarrow y=0$.\\
$(\Longleftarrow)$ It is obvious by the definition of $d_1(\cdot,\cdot)$.\\
$(\Longrightarrow)$ Assume that $d_1(0,y)=0$. Suppose, to the contrary, that $y\neq0$. On the one hand, by the definition of $d_2(\cdot,\cdot)$, we have $d_2(0,y)=\|y\|>0$. On the other hand, since $y\in\mathcal{C}_{(w,\theta)}$, by the definition of $\mathcal{C}_{(w,\theta)}$, we have that $d_2(0,y)\leq d_1(0,y)\tan\theta=0$. It is a contradiction.\qed
\end{proof}

\begin{theorem}\label{th:3}
Let the direction $w\in\mathbb{R}^m\backslash\{0\}$ and angle $\theta\in(0,\pi/2)$ be given. For $y^1,y^2\in\mathbb{R}^m$, we have $y^1\leqq_{\mathcal{C}_{(w,\theta)}}y^2$ if and only if $d_1(y^1,y^2)\geq0$ and $d_2(y^1,y^2)\leq d_1(y^1,y^2)\tan\theta$.
\end{theorem}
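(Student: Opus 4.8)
The plan is to reduce the whole statement to the \emph{translation invariance} of the two quantities $d_1$ and $d_2$, combined with Lemma \ref{le:4}. The crucial observation is that both $d_1(y^1,y^2)$ and $d_2(y^1,y^2)$ depend on the pair $(y^1,y^2)$ only through the difference $y^2-y^1$. Indeed, writing $y:=y^2-y^1$, the definitions give $d_1(y^1,y^2)=\frac{(y^2-y^1)^Tw}{\|w\|}=\frac{y^Tw}{\|w\|}=d_1(0,y)$, and substituting this into the formula for $d_2$ yields $d_2(y^1,y^2)=\|(y^2-y^1)-d_1(y^1,y^2)\frac{w}{\|w\|}\|=d_2(0,y)$. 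Hence everything can be rephrased in terms of the single vector $y=y^2-y^1$, and the only nontrivial ingredient will be the sign information carried by Lemma \ref{le:4}.

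First I would unwind the cone ordering via its definition: $y^1\leqq_{\mathcal{C}_{(w,\theta)}}y^2$ holds precisely when $y^2-y^1\in\mathcal{C}_{(w,\theta)}$, which by the definition of $\mathcal{C}_{(w,\theta)}$ means $d_2(0,y^2-y^1)\le d_1(0,y^2-y^1)\tan\theta$. Applying the translation invariance established above, this inequality is literally $d_2(y^1,y^2)\le d_1(y^1,y^2)\tan\theta$. This identification is the heart of the argument, and both implications follow from it.

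For the forward direction, assuming $y^2-y^1\in\mathcal{C}_{(w,\theta)}$ immediately yields the inequality $d_2(y^1,y^2)\le d_1(y^1,y^2)\tan\theta$, while the nonnegativity $d_1(y^1,y^2)\ge0$ is exactly Lemma \ref{le:4} applied to the point $y^2-y^1\in\mathcal{C}_{(w,\theta)}$ (which gives $d_1(0,y^2-y^1)\ge0$). For the converse, assuming $d_1(y^1,y^2)\ge0$ and $d_2(y^1,y^2)\le d_1(y^1,y^2)\tan\theta$, translation invariance turns the second inequality into $d_2(0,y^2-y^1)\le d_1(0,y^2-y^1)\tan\theta$, which is precisely the membership condition $y^2-y^1\in\mathcal{C}_{(w,\theta)}$, i.e.\ $y^1\leqq_{\mathcal{C}_{(w,\theta)}}y^2$.

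The only point requiring care—and the reason the hypothesis lists $d_1(y^1,y^2)\ge0$ explicitly—is the logical status of that nonnegativity clause. In the forward direction it is \emph{not} part of the defining inequality of the cone and must be supplied separately through Lemma \ref{le:4}. In the backward direction it is in fact redundant, since $d_2\ge0$ together with $\tan\theta>0$ already forces $d_1\ge0$ whenever $d_2\le d_1\tan\theta$; but retaining it makes the characterization symmetric and directly usable in the discarding test. I therefore do not expect any genuine obstacle beyond cleanly verifying translation invariance and citing Lemma \ref{le:4}; the statement is essentially a restatement of cone membership once the two distances are recognized as functions of $y^2-y^1$ alone.
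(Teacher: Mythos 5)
Your proof is correct and follows essentially the same route as the paper's: unwind the ordering to membership of $y^2-y^1$ in $\mathcal{C}_{(w,\theta)}$, use the translation invariance $d_1(y^1,y^2)=d_1(0,y^2-y^1)$ and $d_2(y^1,y^2)=d_2(0,y^2-y^1)$, and invoke Lemma \ref{le:4} for the nonnegativity of $d_1$ in the forward direction. Your added observation that the clause $d_1(y^1,y^2)\ge 0$ is redundant in the converse (since $d_2\ge 0$ and $\tan\theta>0$ already force it) is a correct small refinement not made explicit in the paper.
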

\begin{proof}
($\Longrightarrow$) Since $y^1\leqq_{\mathcal{C}_{(w,\theta)}}y^2$, we have that $y^2-y^1\in\mathcal{C}_{(w,\theta)}$. By the definition of $\mathcal{C}_{(w,\theta)}$, we have $d_2(0,y^2-y^1)\leq d_1(0,y^2-y^1)\tan\theta$. Furthermore, according to Lemma \ref{le:4}, we know that $d_1(0,y^2-y^1)\geq0$. Moreover, by the definitions of $d_1$ and $d_2$, we know that $d_1(y^1,y^2)=d_1(0,y^2-y^1)$ and $d_2(y^1,y^2)=d_2(0,y^2-y^1)$.\\
($\Longleftarrow$) The proof is guaranteed by the definition of $\mathcal{C}_{(w,\theta)}$. \qed
\end{proof}

\begin{corollary}\label{co:1}
Let the direction $w\in\mathbb{R}^m\backslash\{0\}$ and angle $\theta\in(0,\pi/2)$ be given. For $y^1,y^2\in\mathbb{R}^m$, we have $y^1\leq_{\mathcal{C}_{(w,\theta)}}y^2$ if and only if $0\leq d_2(y^1,y^2)/d_1(y^1,y^2)\leq\tan\theta$ and $y^1\neq y^2$.
\end{corollary}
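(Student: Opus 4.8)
The plan is to derive the strict ordering $\leq_{\mathcal{C}_{(w,\theta)}}$ from the non-strict ordering $\leqq_{\mathcal{C}_{(w,\theta)}}$ already characterized in Theorem \ref{th:3}, by unwinding the definition $y^1\leq_{\mathcal{C}_{(w,\theta)}}y^2\Leftrightarrow y^2-y^1\in\mathcal{C}_{(w,\theta)}\backslash\{0\}$. Since the only difference between $\leqq_{\mathcal{C}_{(w,\theta)}}$ and $\leq_{\mathcal{C}_{(w,\theta)}}$ is the exclusion of the zero vector, I expect the entire argument to reduce to translating ``$y^2-y^1\neq 0$'' into the stated conditions, with Lemma \ref{le:4} doing the real work.

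First I would invoke Theorem \ref{th:3} to replace the membership $y^2-y^1\in\mathcal{C}_{(w,\theta)}$ with the pair of inequalities $d_1(y^1,y^2)\geq 0$ and $d_2(y^1,y^2)\leq d_1(y^1,y^2)\tan\theta$. Then $y^1\leq_{\mathcal{C}_{(w,\theta)}}y^2$ holds exactly when these two inequalities hold \emph{and} $y^2-y^1\neq 0$, i.e.\ $y^1\neq y^2$. So the remaining task is to show that, under $y^1\neq y^2$, the two inequalities from Theorem \ref{th:3} are equivalent to the single quotient condition $0\leq d_2(y^1,y^2)/d_1(y^1,y^2)\leq\tan\theta$ (which in particular presupposes $d_1(y^1,y^2)\neq 0$ so the quotient is well defined).

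The key step is handling the denominator. Applying Lemma \ref{le:4} to the vector $y=y^2-y^1$, together with $d_1(y^1,y^2)=d_1(0,y^2-y^1)$ and $d_2(y^1,y^2)=d_2(0,y^2-y^1)$ (the translation identities noted in the proof of Theorem \ref{th:3}), I would argue that when $y^1\neq y^2$ the membership in the cone forces $d_1(y^1,y^2)>0$ strictly: by Lemma \ref{le:4}, $d_1(0,y^2-y^1)=0$ would imply $y^2-y^1=0$, contradicting $y^1\neq y^2$. Hence $d_1(y^1,y^2)>0$, the quotient $d_2/d_1$ is well defined and nonnegative (since $d_2\geq 0$ always), and dividing the inequality $d_2(y^1,y^2)\leq d_1(y^1,y^2)\tan\theta$ through by the positive quantity $d_1(y^1,y^2)$ yields $d_2(y^1,y^2)/d_1(y^1,y^2)\leq\tan\theta$. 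Conversely, from $0\leq d_2/d_1\leq\tan\theta$ with $y^1\neq y^2$, I recover $d_1>0$ (the quotient being defined forces $d_1\neq 0$, and nonnegativity of $d_2$ together with Lemma \ref{le:4} pins down the sign) and multiply back to restore the Theorem \ref{th:3} inequalities.

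The main obstacle is purely bookkeeping rather than mathematical: one must be careful that the quotient condition is only meaningful once $d_1(y^1,y^2)\neq 0$ is established, so the equivalence has to route the strictness $d_1>0$ through Lemma \ref{le:4} before the division is performed, and the hypothesis $y^1\neq y^2$ must be carried along on both directions of the ``if and only if.'' Once the strict positivity of $d_1$ is secured, the remaining manipulation is a routine division/multiplication by a positive scalar.
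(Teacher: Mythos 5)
Your overall route is exactly the paper's: the published proof of Corollary \ref{co:1} is the single sentence ``The conclusion is derive[d] from Theorem \ref{th:3} and Lemma \ref{le:4}'', and your forward direction fills in precisely the intended details --- Theorem \ref{th:3} gives $d_1(y^1,y^2)\geq 0$ and $d_2(y^1,y^2)\leq d_1(y^1,y^2)\tan\theta$, and Lemma \ref{le:4} applied to $y=y^2-y^1$ upgrades $d_1\geq 0$ to $d_1>0$ once $y^1\neq y^2$, after which dividing by $d_1$ is legitimate. That half is correct and complete.

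The converse direction, however, contains a circular step. You write that ``nonnegativity of $d_2$ together with Lemma \ref{le:4} pins down the sign'' of $d_1$, but Lemma \ref{le:4} has $y\in\mathcal{C}_{(w,\theta)}$ as its hypothesis, which is exactly what you are trying to establish at that point in the argument. When $d_2(y^1,y^2)>0$ the sign is recovered without Lemma \ref{le:4} (a nonnegative quotient with positive numerator forces a positive denominator), but when $d_2(y^1,y^2)=0$, i.e.\ $y^2-y^1$ is a scalar multiple of $w$, the quotient equals $0$ for either sign of $d_1$; taking $y^2-y^1=-cw$ with $c>0$ gives $0\leq d_2/d_1\leq\tan\theta$ and $y^1\neq y^2$ while $y^2-y^1\notin\mathcal{C}_{(w,\theta)}$. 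So the corollary as literally stated fails in this edge case, and the quotient condition has to be read as carrying the additional requirement $d_1(y^1,y^2)>0$. To be fair, the paper's one-line proof glosses over exactly the same point, so this is as much an imprecision in the statement as a gap in your argument --- but your write-up should not invoke Lemma \ref{le:4} where its hypothesis is not yet available, and should instead either treat the case $d_2=0$, $d_1<0$ explicitly or restate the condition with $d_1>0$ made explicit.
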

\begin{proof}
The conclusion is derive from Theorem \ref{th:3} and Lemma \ref{le:4}.
\end{proof}

The following theorem states the way to construct a nondominated set with respect to $\mathcal{C}_{(w,\theta)}$.
\begin{theorem}\label{th:4}
Let the direction $w\in\mathbb{R}^m\backslash\{0\}$ and angle $\theta\in(0,\pi/2)$ be given. For two points $y^1,y^2\in\mathbb{R}^m$, we have $y^1\nleqq_{\mathcal{C}_{(w,\theta)}} y^2$ and $y^2\nleqq_{\mathcal{C}_{(w,\theta)}} y^1$ if and only if one of the following conditions holds true:
\begin{enumerate}[\rm (1)]
  \item $d_1(y^1,y^2)<0$ and $d_2(y^2,y^1)/d_1(y^2,y^1)>\tan\theta$;
  \item $d_1(y^2,y^1)<0$ and $d_2(y^1,y^2)/d_1(y^1,y^2)>\tan\theta$.
\end{enumerate}
\end{theorem}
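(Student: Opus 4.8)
The plan is to reduce the statement entirely to the characterization of $\leqq_{\mathcal{C}_{(w,\theta)}}$ furnished by Theorem \ref{th:3}, and then to negate and recombine. The first thing I would record is two elementary symmetry relations that fall straight out of the definitions of $d_1$ and $d_2$. Since $d_1(y^1,y^2)=(y^2-y^1)^Tw/\|w\|$ is linear and odd in $y^2-y^1$, we have $d_1(y^2,y^1)=-d_1(y^1,y^2)$; and since $d_2$ is the norm of a vector that merely changes overall sign when the roles of $y^1$ and $y^2$ are swapped, we have $d_2(y^2,y^1)=d_2(y^1,y^2)\ge 0$. Writing $a:=d_1(y^1,y^2)$ and $b:=d_2(y^1,y^2)$ keeps the bookkeeping transparent, so that $d_1(y^2,y^1)=-a$ and $d_2(y^2,y^1)=b$.

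Next I would apply Theorem \ref{th:3} to both orderings, obtaining $y^1\leqq_{\mathcal{C}_{(w,\theta)}}y^2 \Leftrightarrow a\ge 0 \text{ and } b\le a\tan\theta$, and symmetrically $y^2\leqq_{\mathcal{C}_{(w,\theta)}}y^1 \Leftrightarrow a\le 0 \text{ and } b\le -a\tan\theta$. Negating each conjunction turns the two non-dominance requirements into $y^1\nleqq_{\mathcal{C}_{(w,\theta)}}y^2 \Leftrightarrow (a<0)\vee(b>a\tan\theta)$ and $y^2\nleqq_{\mathcal{C}_{(w,\theta)}}y^1 \Leftrightarrow (a>0)\vee(b>-a\tan\theta)$. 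Because every reduction up to this point is a genuine biconditional, both directions of the asserted ``if and only if'' are obtained simultaneously once the conjunction of these two disjunctions is shown to coincide with the listed alternatives.

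The core of the argument is a short case analysis on the sign of $a$ that simplifies that conjunction. When $a<0$ the literal $a<0$ satisfies the first disjunction automatically, so the conjunction collapses to $b>-a\tan\theta$; dividing by $-a>0$ this reads $d_2(y^2,y^1)/d_1(y^2,y^1)>\tan\theta$ together with $d_1(y^1,y^2)<0$, which is exactly condition (1). Symmetrically, when $a>0$ the second disjunction is automatic and the conjunction reduces to $b>a\tan\theta$, i.e. $d_1(y^2,y^1)<0$ and $d_2(y^1,y^2)/d_1(y^1,y^2)>\tan\theta$, which is condition (2). The only genuine care needed here is to track the orientation of each inequality against the sign of $a$ when clearing the denominator, so that the strict ratio inequalities emerge with the correct direction.

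The step I expect to demand the most attention is the degenerate projection case $a=d_1(y^1,y^2)=0$, i.e. $y^2-y^1\perp w$. There both disjunctions collapse to the single requirement $b>0$, equivalently $y^1\neq y^2$, yet neither listed condition can hold, since each demands a strictly negative value of $d_1$ so that its ratio test is well defined. I would therefore make explicit that the equivalence is intended for pairs with $d_1(y^1,y^2)\neq 0$, where division by the projection is legitimate; to keep the biconditional exact for all $y^1\neq y^2$ one must append the degenerate alternative $d_1(y^1,y^2)=0$ with $d_2(y^1,y^2)>0$ to the disjunction. Pinning down this boundary behavior, and deciding how to phrase it, is the real obstacle — the rest is the routine sign-bookkeeping described above.
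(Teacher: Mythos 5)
Your argument is essentially the paper's own proof: both apply Theorem \ref{th:3} to each ordering, negate the resulting conjunctions, and resolve the conjunction of the two disjunctions by a case split on the sign of $d_1(y^1,y^2)$, using the symmetries $d_1(y^2,y^1)=-d_1(y^1,y^2)$ and $d_2(y^2,y^1)=d_2(y^1,y^2)$. The one place you diverge is the degenerate case $d_1(y^1,y^2)=0$ with $y^1\neq y^2$ (i.e.\ $y^2-y^1\perp w$), and your concern there is legitimate: the paper simply asserts $d_1(y^1,y^2)\neq 0$ and $d_2(y^1,y^2)>0$ as obvious, which is false in general, and in that degenerate case both non-dominance relations hold while neither condition (1) nor (2) does, so the theorem as stated needs exactly the extra alternative you propose. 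In short, you have reproduced the intended proof and, in the process, caught a boundary case the paper overlooks rather than a gap in your own reasoning.
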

\begin{proof}
First of all, by the definitions of $d_1$ and $d_2$, it is obvious to see that $d_1(y^1,y^2)=-d_1(y^2,y^1)\neq0$ and $d_2(y^2,y^1)=d_2(y^1,y^2)>0$. Therefore, conditions (1) and (2) cannot be satisfied simultaneously. From Theorem \ref{th:3}, we have that
\begin{align*}
y^1\nleqq_{\mathcal{C}_{(w,\theta)}} y^2\Longleftrightarrow d_1(y^1,y^2)< 0~\hbox{or}~d_2(y^1,y^2)/d_1(y^1,y^2)>\tan\theta;\\
y^2\nleqq_{\mathcal{C}_{(w,\theta)}} y^1\Longleftrightarrow d_1(y^2,y^1)< 0~\hbox{or}~d_2(y^2,y^1)/d_1(y^2,y^1)>\tan\theta.
\end{align*}
It follows that, if $d_1(y^1,y^2)< 0$, we know that
\begin{align*}
y^1\nleqq_{\mathcal{C}_{(w,\theta)}} y^2\Longleftrightarrow d_1(y^1,y^2)< 0~\hbox{and}~y^2\nleqq_{\mathcal{C}_{(w,\theta)}} y^1\Longleftrightarrow d_2(y^2,y^1)/d_1(y^2,y^1)>\tan\theta;
\end{align*}
otherwise, if $d_1(y^2,y^1)< 0$, we know that
\begin{align*}
y^2\nleqq_{\mathcal{C}_{(w,\theta)}} y^1\Longleftrightarrow d_1(y^2,y^1)< 0~\hbox{and}~ y^1\nleqq_{\mathcal{C}_{(w,\theta)}} y^2\Longleftrightarrow d_2(y^1,y^2)/d_1(y^1,y^2)>\tan\theta.
\end{align*}\qed
\end{proof}

By Corollary \ref{co:1}, the $\mathcal{C}_{(w,\theta)}$-dominance relation between $x^1,x^2\in\Omega$ can be defined as follows:
\begin{align*}
x^1~\mathcal{C}_{(w,\theta)}\hbox{-}dominates~x^2&\Longleftrightarrow F(x^1)\leq_{\mathcal{C}_{(w,\theta)}}F(x^2)\Longleftrightarrow F(x^2)-F(x^1)\in\mathcal{C}_{(w,\theta)}\backslash\{0\}\\ &\Longleftrightarrow 0\leq \frac{d_2(F(x^1),F(x^2))}{d_1(F(x^1),F(x^2))}\leq\tan\theta,~F(x^1)\neq F(x^2)
\end{align*}

The following theorem reveals the relationship between the solution set $\mathcal{M}(f(\Omega),\mathcal{C}_{(w,\theta)})$ and $\mathcal{C}_{(w,\theta)}$-dominance.
\begin{theorem}\label{th:5}
   Let the direction $w\in\mathbb{R}^m\backslash\{0\}$ and angle $\theta\in(0,\pi/2)$ be given, we have $x\in \mathcal{M}(F(\Omega),\mathcal{C}_{(w,\theta)})$ if and only if there does not exist $x'\in\Omega$, such that $0\leq\frac{d_2(F(x^1),F(x^2))}{d_1(F(x^1),F(x^2))}\leq\tan\theta,~F(x^1)\neq F(x^2)$.
\end{theorem}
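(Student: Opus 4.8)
The plan is to prove the equivalence by simply unfolding the definition of the efficient set $\mathcal{M}(F(\Omega),\mathcal{C}_{(w,\theta)})$ and then translating the cone-dominance relation into the $d_1,d_2$ language via Corollary \ref{co:1}. Since every intermediate step is itself an if-and-only-if, both directions are obtained simultaneously and no separate case analysis is required.

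First I would recall that, by the definition of an efficient solution with respect to a cone, $x\in\mathcal{M}(F(\Omega),\mathcal{C}_{(w,\theta)})$ holds if and only if there is no $x'\in\Omega$ with $F(x')\leq_{\mathcal{C}_{(w,\theta)}}F(x)$; that is, no feasible point $\mathcal{C}_{(w,\theta)}$-dominates $x$. Next I would invoke Corollary \ref{co:1} with $y^1=F(x')$ and $y^2=F(x)$, which characterizes $F(x')\leq_{\mathcal{C}_{(w,\theta)}}F(x)$ as the conjunction $0\leq d_2(F(x'),F(x))/d_1(F(x'),F(x))\leq\tan\theta$ together with $F(x')\neq F(x)$. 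Substituting this characterization inside the negated existential statement of the previous step yields precisely the condition asserted in the theorem, closing the chain of equivalences.

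The proof carries essentially no obstacle: the substantive work has already been absorbed into Corollary \ref{co:1}, whose proof in turn rests on Lemma \ref{le:4} guaranteeing that the quotient $d_2/d_1$ is well defined (namely $d_1(F(x'),F(x))>0$ whenever a nonzero difference can lie in $\mathcal{C}_{(w,\theta)}$). The only points requiring care are purely notational: one must keep the argument order of $d_1,d_2$ consistent with the convention $d_i(y^1,y^2)$ used in Corollary \ref{co:1}, so that the displayed condition in the statement is read with $F(x')$ in the first slot and $F(x)$ in the second (i.e. $F(x')$ and $F(x)$ in place of the $F(x^1),F(x^2)$ appearing in the displayed formula). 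With this bookkeeping in place, the equivalence follows directly.
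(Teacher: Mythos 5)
Your proposal is correct and matches the paper's own (very short) proof, which likewise just combines the definition of an efficient solution with Corollary \ref{co:1}. Your remark about reading the displayed condition with $F(x')$ and $F(x)$ in place of $F(x^1),F(x^2)$ correctly identifies a notational slip in the theorem statement itself.
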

\begin{proof}
The proof is guaranteed by Corollary \ref{co:1} and the definition of efficient solution.\qed
\end{proof}

Now we can derive the discarding test with respect to $\mathcal{C}_{(w,\theta)}$.

\noindent{\bf $\mathcal{C}_{(w,\theta)}$-dominance-based discarding test} \emph{Let problem \eqref{VP} be given, let $B$ be a subbox and $l(B)$ its lower bound. We assume that $\mathcal{C}_{(w,\theta)}\supseteq\mathbb{R}^m_+$ for a given direction $w\in\mathbb{R}^m\backslash\{0\}$ and an angle $\theta\in(0,\pi/2)$. If there exists a feasible objective vector $u\in F(\Omega)$, such that $u\leqq_{\mathcal{C}_{(w,\theta)}} l(B)$, then $B$ will be discarded.}

Next we state the correctness of the proposed discarding test.

\begin{theorem}\label{th:6}
Let problem \eqref{VP} be given, let $B$ be a subbox and $l(B)$ its lower bound. We assume that $\mathcal{C}_{(w,\theta)}\supseteq\mathbb{R}^m_+$ where $w\in\mathbb{R}^m\backslash\{0\}$ and $\theta\in(0,\pi/2)$. Let $\mathcal{U}(\mathcal{C}_{(w,\theta)})$ be a nondominated upper bound set with respect to $\mathcal{C}_{(w,\theta)}$. If there exists an upper bound $u\in\mathcal{U}(\mathcal{C}_{(w,\theta)})$ such that $u\leqq_{\mathcal{C}_{(w,\theta)}} l(B)$, then $B\cap \mathcal{M}(F(\Omega),\mathcal{C}_{(w,\theta)})=\emptyset$.
\end{theorem}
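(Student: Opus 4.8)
The plan is to mirror the argument of Theorem \ref{th:2}, replacing the polyhedral cone $\mathcal{C}_{\epsilon}$ by the ice cream cone $\mathcal{C}_{(w,\theta)}$ and proceeding by contradiction. I would begin by assuming the conclusion fails, i.e. that there exists a point $x^{*}\in B$ that is efficient with respect to $\mathcal{C}_{(w,\theta)}$, so that $x^{*}\in B\cap\mathcal{M}(F(\Omega),\mathcal{C}_{(w,\theta)})$. The goal is then to produce a feasible objective vector that $\mathcal{C}_{(w,\theta)}$-dominates $F(x^{*})$, contradicting efficiency.

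First I would invoke the lower bound property \eqref{IE:2.2}, which gives $l(B)\leq F(x^{*})$ in the Pareto sense since $x^{*}\in B$. Because the hypothesis assumes $\mathcal{C}_{(w,\theta)}\supseteq\mathbb{R}^m_+$, Lemma \ref{le:1} upgrades this Pareto relation to the cone relation $l(B)\leq_{\mathcal{C}_{(w,\theta)}}F(x^{*})$. Next I would chain this with the hypothesis $u\leqq_{\mathcal{C}_{(w,\theta)}}l(B)$, following the two cases used in Theorem \ref{th:2}: if $u=l(B)$ the relation $u\leq_{\mathcal{C}_{(w,\theta)}}F(x^{*})$ is immediate, while if $u\neq l(B)$ we have the strict relation $u\leq_{\mathcal{C}_{(w,\theta)}}l(B)$, and transitivity yields $u\leq_{\mathcal{C}_{(w,\theta)}}F(x^{*})$. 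Since $u$ lies in the nondominated upper bound set, it is the image $u=F(x_u)$ of some feasible $x_u\in\Omega$, so $F(x_u)\leq_{\mathcal{C}_{(w,\theta)}}F(x^{*})$ contradicts $x^{*}\in\mathcal{M}(F(\Omega),\mathcal{C}_{(w,\theta)})$ via Theorem \ref{th:5} (equivalently, the definition of efficiency).

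The step requiring the most care is transitivity of $\leq_{\mathcal{C}_{(w,\theta)}}$, since this was recorded as a separate lemma (Lemma \ref{le:3}) only for the polyhedral cone $\mathcal{C}_{\epsilon}$ and not for the ice cream cone; I expect this to be the main obstacle, though a mild one, because transitivity holds for any pointed convex cone. Concretely, writing $a=l(B)-u\in\mathcal{C}_{(w,\theta)}$ and $b=F(x^{*})-l(B)\in\mathcal{C}_{(w,\theta)}\setminus\{0\}$, the sum $F(x^{*})-u=a+b$ lies in $\mathcal{C}_{(w,\theta)}$ by convexity, and it is nonzero: if $a+b=0$ then $b=-a$ would force $b\in\mathcal{C}_{(w,\theta)}\cap(-\mathcal{C}_{(w,\theta)})=\{0\}$, contradicting $b\neq 0$, where pointedness of $\mathcal{C}_{(w,\theta)}$ is exactly what rules this out. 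Hence $F(x^{*})-u\in\mathcal{C}_{(w,\theta)}\setminus\{0\}$, i.e. $u\leq_{\mathcal{C}_{(w,\theta)}}F(x^{*})$, which completes the contradiction and in fact handles both cases at once. I would therefore state this transitivity inline (or cite it as a standard property of the pointed closed convex cone $\mathcal{C}_{(w,\theta)}$ already noted before Lemma \ref{le:4}) rather than appeal to Lemma \ref{le:3}, which concerns a different cone.
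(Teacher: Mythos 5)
Your proposal is correct and follows essentially the same argument as the paper: assume an efficient $x^{*}\in B$, use \eqref{IE:2.2} and Lemma \ref{le:1} to get $l(B)\leq_{\mathcal{C}_{(w,\theta)}}F(x^{*})$, split on whether $u=l(B)$, chain by transitivity, and contradict efficiency via Theorem \ref{th:5}. The only difference is that you explicitly justify transitivity of $\leq_{\mathcal{C}_{(w,\theta)}}$ from convexity and pointedness, whereas the paper simply asserts it; your inline argument is a worthwhile addition since the paper's Lemma \ref{le:3} covers only the polyhedral cone.
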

\begin{proof}
Let us assume that $x^*\in B\cap \mathcal{M}(f(\Omega),\mathcal{C}_{(w,\theta)})$. According to \eqref{IE:2.2}, we know that $l(B)\leq F(x^*)$. According to Lemma \ref{le:1}, we have $l(B)\leq_{\mathcal{C}_{(w,\theta)}} F(x^*)$. If $u=l(B)$, then we have $u\leq_{\mathcal{C}_{(w,\theta)}} F(x^*)$. If $u\leq_{\mathcal{C}_{(w,\theta)}}l(B)$, by the transitivity of $\mathcal{C}_{(w,\theta)}$, we have $u\leq_{\mathcal{C}_{(w,\theta)}} F(x^*)$. From Theorem \ref{th:5}, we obtain a contradiction $x^*\notin \mathcal{M}(F(\Omega),\mathcal{C}_{(w,\theta)})$.\qed
\end{proof}

Algorithm \ref{alg:3} gives an implementation of the $\mathcal{C}_{(w,\theta)}$-dominance-based discarding test, where the flag $D$ stands for decision to discard the subbox after the algorithm. Suppose that a nondominated upper bound set $\mathcal{U}(\mathcal{C}_{(w,\theta)})$ is known, and if there exists an upper bound $u\in\mathcal{U}(\mathcal{C}_{(w,\theta)})$ such that $u\leqq_{\mathcal{C}_{(w,\theta)}} l(B)$, that is, $d_1(u,l(b))\geq0$ and $d_2(u,l(b))\leq d_1(u,l(b))\tan\theta$, then the flag $D$ for $B$ is set to 1; otherwise, the flag $D$ is set to 0.

\begin{algorithm}
\caption{\texttt{$\mathcal{C}_{(w,\theta)}$-dominance-based-DiscardingTest($B,l(B),\mathcal{U},\mathcal{C}_{(w,\theta)}$)}}\label{alg:3}
    $D\leftarrow0$\;
    \ForEach{$u\in\mathcal{U}$}{
    $d_1\leftarrow \frac{(l(B)-u)^T w}{\|w\|}$, $d_2\leftarrow \|l(B)-u-d_1\frac{w}{\|w\|}\|$\;
    \If{$d_1\geq0$ and $d_2\leq d_1\tan\theta$}
    {
    $D\leftarrow1$\;
    \textbf{break for-loop}
    }}
    \Return $D$
\end{algorithm}

\subsection{The complete algorithm}

The introduction of two new discarding tests allows us to present the cone dominance-based branch and bound algorithm (abbreviated as CBB). As previously stated, the aim of CBB is to approximate a portion of the Pareto optimal set by employing a general cone (either the polyhedral cone $\mathcal{C}_\epsilon$ or the general ice cream cone $\mathcal{C}_{(w,\theta)}$). For the sake of clarity, both types of cones will be referred to as $\mathcal{C}$. It should be noted that the parameters of the cone must satisfy the conditions set forth in the aforementioned theorems. In particular the direction and angle of the general ice cream cone must ensure that the generated cone contains $\mathbb{R}^m_+$. Furthermore, in order to correctly express the region over which the solutions are distributed, the direction of the general ice cream cone is set to $1/w$.

The pseudocode of CBB is presented in Algorithm \ref{alg:4}. It should be noted that a parallel breadth-first search strategy is employed to search all subboxes simultaneously in each iteration. Therefore, in line 4, all subboxes in the box collection $\mathcal{B}_{k-1}$ are bisected perpendicularly to the direction of maximum width in order to construct the current collection $\mathcal{B}_{k}$ simultaneously. This branching process produces subboxes with the same diameter, therefore in line 5, only the diameter of one subbox needs to be calculated to obtain the value of $\omega_k$. Subsequently, in the event that problem \eqref{VP} contains inequality constraints, the feasibility test described in \cite{ref11} is employed to exclude the subboxes that do not contain any feasible solutions.

In the first for-loop, the lower bound $l(B)=(l_1,\ldots,l_m)^T$ of the subbox $B$ is calculated as follows:
\begin{align}
l_i = f_i(m(B))-\frac{L_i}{2}\|\omega(B)\|,\quad i=1,\ldots,m,\label{E:3.2}
\end{align}
where $L_i$ is the Lipschitz constant of $f_i$. Subsequently, all lower bounds are stored in a lower bound set, denoted by $\mathcal{L}$. For each lower bound $l\in \mathcal{L}$, a comparison is made between $l$ and other lower bounds stored in $\mathcal{L}$ via the $\mathcal{C}$-dominance relation: if $l$ is $\mathcal{C}$-dominated by any other lower bounds, then $l$ will be removed from $\mathcal{L}$. Otherwise, the lower bounds that are $\mathcal{C}$-dominated by $l$ will be removed from $\mathcal{L}$. This process allows us to identify a nondominated lower bound set with respect to $\mathcal{C}$, denoted by $\mathcal{L}_k(\mathcal{C})$, which can be extracted from the original set of lower bounds $\mathcal{L}$. The subboxes corresponding to the lower bounds stored in $\mathcal{L}_k(\mathcal{C})$ constitute $\bar{\mathcal{B}}$.

In the second for-loop, an MOEA is employed to determine the upper bounds for all subboxes stored in $\bar{\mathcal{B}}$. The upper bounds and the corresponding solutions (the preimage of the upper bounds) are stored in the sets $\mathcal{U}$ and $\mathcal{X}$, respectively. In order to reduce the computational costs, a mini MOEA is applied, which has a small initial population size and a few generations. In the event that the problem also contains inequality constraints, it is possible to employ constrained handling approaches, as outlined in \cite{ref15,ref46}, in order to guarantee that feasible solutions and upper bounds are obtained. Thereafter, the nondominated upper bound set with respect to $\mathcal{C}$, denoted by $\mathcal{U}_k(\mathcal{C})$, and its corresponding solution set, denoted by $\mathcal{X}_k(\mathcal{C})$, will be identified from the sets $\mathcal{U}$ and $\mathcal{X}$ by means of the $\mathcal{C}$-dominance relation.

In the third for-loop, the discarding flags for all subboxes are calculated by Algorithms \ref{alg:2} or \ref{alg:3} and are stored in a flag list $\mathcal{D}$. Subsequently, in line 22, the subboxes whose flags are equal to 1 will be removed from $\mathcal{B}_k$. It is also worth noting that, in order to accelerate the computational process, it is possible to compute the discarding flags for all subboxes simultaneously, that is to say, to parallelize the third for-loop. In addition, the first and second for-loops can also be parallelized.


\begin{algorithm}
\caption{\texttt{Cone Dominance-Based Branch and Bound Algorithm}}\label{alg:4}
  \SetKwInOut{Input}{Input}\SetKwInOut{Output}{Output}
  \SetKwFunction{MOEA}{MOEA}
  \SetKwFunction{DT}{$\mathcal{C}$-dominance-based-DiscardingTest}
  \Input{problem \eqref{VP}, cone $\mathcal{C}$;}
    \Output{$\mathcal{B}_{k}$, $\mathcal{U}_{k}(\mathcal{C})$, $\mathcal{X}_k(\mathcal{C})$;}
    $k\leftarrow1$, $\mathcal{B}_0\leftarrow\Omega$, $\omega_{k-1}\leftarrow \|\omega(\Omega)\|$, $d\leftarrow10^6$\;
  \While{$d>\varepsilon$ or $\omega_{k-1}>\delta$}{
  $\mathcal{L}\leftarrow\emptyset$, $\mathcal{U}\leftarrow\emptyset$, $\mathcal{X}\leftarrow\emptyset$\;
  Construct $\mathcal{B}_{k}$ by bisecting all boxes in $\mathcal{B}_{k-1}$\;
  $\omega_k\leftarrow \max\{\|\omega(B)\|:B\in\mathcal{B}_{k}\}$\;
  Update $\mathcal{B}_{k}$ by the feasibility test suggested in \cite{ref11}\;
  \ForEach{$B\in\mathcal{B}_{k}$}
  {
    Calculate for $B$ its lower bound $l(B)$ by equation (\ref{E:3.2})\;
    $\mathcal{L}\leftarrow \mathcal{L}\cup l(B)$\;
  }
  Find a nondominated lower bound set $\mathcal{L}_k(\mathcal{C})$ from $\mathcal{L}$ by the $\mathcal{C}$-dominance\;
  Determine the box collection $\bar{\mathcal{B}}\subset\mathcal{B}_k$ according to $\mathcal{L}_k(\mathcal{C})$\;
  \ForEach{$B\in\bar{\mathcal{B}}$}
  {
    $\bar{\mathcal{U}},\bar{\mathcal{X}}\leftarrow\MOEA(B)$\;
    $\mathcal{U}\leftarrow\mathcal{U}\cup \bar{\mathcal{U}}$, $\mathcal{X}\leftarrow\mathcal{X}\cup \bar{\mathcal{X}}$\;
  }

  Find a nondominated upper bound set $\mathcal{U}_k(\mathcal{C})$ from $\mathcal{U}$ and the corresponding solution set $\mathcal{X}_k(\mathcal{C})$ from $\mathcal{X}$ by the $\mathcal{C}$-dominance\;
  \ForEach{$B\in\mathcal{B}_k$}
  { $D(B)\leftarrow \DT(B,l(B),\mathcal{U}_k(\mathcal{C}))$\;
  $\mathcal{D}\leftarrow \mathcal{D}\cup D(B)$\;
  }
  Update $\mathcal{B}_k$ according to the flag list $\mathcal{D}$\;
  $d\leftarrow d_h(\mathcal{U}_{k}(\mathcal{C}),\mathcal{L}_{k}(\mathcal{C}))$, $k\leftarrow k+1$.
  }
\end{algorithm}

In order to ensure that the trade-offs between disparately scaled objectives can be correctly expressed, we use an objective normalization technique to replace $f_i\;(i=1,\ldots,m)$ by
\begin{align*}
\bar{f}_i=\frac{f_i-z^*_i}{z^{nad}_i-z^*_i},
\end{align*}
where $z^{nad}=(z^{nad}_1,\ldots,z^{nad}_m)^T$ and $z^*=(z^*_1,\ldots,z^*_m)^T$ are the nadir and ideal points, respectively, i.e., $z^*=\min\{f_i(x):x\in\mathcal{M}(f(\Omega),\mathbb{R}^m_+)\}$ and $z^{nad}=\max\{f_i(x):x\in\mathcal{M}(f(\Omega),\mathbb{R}^m_+)\}$. In other words, $z^*$ and $z^{nad}$ represent the lower and upper bounds of the Pareto front, respectively. It is not straightforward or obligatory to compute $z^*$ and $z^{nad}$. In our implementation, we replace $z^*_i$ by $l^*_i$, which is the smallest value in current lower bound set $\mathcal{L}$. This is defined as $l^*_i=\min\{l_i:l\in \mathcal{L}\}$. Furthermore, it should be noted that the lower bounds are calculated by the midpoints of the subboxes. Consequently, the images of all the midpoints of the subboxes can be used to construct the set $M=\{F(m(B)):B\in\mathcal{B}_k\}$, and then identify a nondominated set $M(\mathbb{R}^m_+)$ from $M$ via the Pareto dominance. The value $z^{nad}$ is replaced by $u^{nad}_i$, which represents the largest value in $M(\mathbb{R}^m_+)$. This is defined as $u^{nad}_i=\max\{u_i:u\in M(\mathbb{R}^m_+)\}$. Both $u^{nad}$ and $l^*$ can be obtained in the first for-loop. It is of paramount importance to note that if the value on $i$-th coordinate of the lower bound (or the image of the midpoint) of a subbox is equal to $l^*_i$ (or $u^{nad}_i$), then the subbox will not be removed in the current iteration, even if its lower bound is $\mathcal{C}$-dominated by an upper bound. Furthermore, if the values on the $i$-th coordinate of the lower bounds (or the images of midpoints) of several subboxes are all equal to $l^*_i$ (or $u^{nad}_i$), all of these subboxes will not be removed. As a result, before applying the $\mathcal{C}$-dominance or $\mathcal{C}$-dominance based discarding test, it is then possible to normalize both the upper bound $u$ and lower bound $l$ as follows:
\begin{align*}
\bar{u}_i=\frac{u_i-l^*_i}{u^{nad}_i-l^*_i}\quad and\quad\bar{l}_i=\frac{l_i-l^*_i}{u^{nad}_i-l^*_i},\,i=1,\ldots,m.
\end{align*}
Furthermore, if the problem contains inequality constraints, only the feasible midpoints are employed in the computation of $u^{nad}$. It is possible that every midpoint is infeasible. In such a case, we are able to identify a set of random points within each subbox and then select the feasible ones to constitute $u^{nad}$.


\subsection{Convergence results}
We start by showing the termination of the algorithm.
\begin{theorem}\label{th:7}
Let the predefined parameters $\varepsilon>0$ and $\delta>0$ be given, CBB terminates after a finite number of iterations.
\end{theorem}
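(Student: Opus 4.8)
The plan is to show that both stopping conditions of the while loop in Algorithm \ref{alg:4}, namely $\omega_{k-1}\le\delta$ and $d\le\varepsilon$, are satisfied after finitely many iterations. I would treat the two monitored quantities $\omega_k$ and $d=d_h(\mathcal{U}_k(\mathcal{C}),\mathcal{L}_k(\mathcal{C}))$ separately and prove that each tends to $0$ as $k\to\infty$; since the thresholds $\varepsilon,\delta>0$ are fixed, this forces the loop test to fail in finite time.

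First I would analyze the width $\omega_k$. Since every box in $\mathcal{B}_{k-1}$ is bisected perpendicularly to its direction of maximum width, and all boxes descend from $\Omega$ under the same deterministic rule, all boxes in $\mathcal{B}_k$ share a common width vector $w^{(k)}$ obtained from $\omega(\Omega)$ by halving its largest coordinate $k$ times; this is precisely why, as remarked after Algorithm \ref{alg:4}, all subboxes have equal diameter. Writing $p_i(k)$ for the number of times coordinate $i$ has been halved, we have $w^{(k)}_i=\omega_i(\Omega)/2^{p_i(k)}$ and $\sum_{i}p_i(k)=k$. Let $S$ denote the set of coordinates halved infinitely often. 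A counting argument shows $S=\{1,\dots,n\}$: the coordinates in $S$ have widths shrinking to $0$, whereas those outside $S$ stay bounded below by a positive constant, so if $S$ were a proper subset the maximum-width rule would eventually be forced to select a coordinate outside $S$ at every large step, contradicting the definition of $S$. Hence every $p_i(k)\to\infty$, so $w^{(k)}\to 0$ and $\omega_k=\|w^{(k)}\|\to 0$; in particular $\omega_{k-1}\le\delta$ for all sufficiently large $k$.

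Next I would bound $d$ by $\omega_k$ up to a constant. The key observation is that the box collection $\bar{\mathcal{B}}$ on which the MOEA runs is exactly the set of boxes whose lower bounds constitute $\mathcal{L}_k(\mathcal{C})$; consequently every upper bound $u\in\mathcal{U}_k(\mathcal{C})\subseteq\mathcal{U}$ equals $F(x)$ for some $x$ lying in a box $B\in\bar{\mathcal{B}}$ whose lower bound $l(B)$ belongs to $\mathcal{L}_k(\mathcal{C})$. Using \eqref{IE:2.2} for the lower estimate together with the Lipschitz continuity of each $f_i$, the bound $\|x-m(B)\|\le\tfrac12\|\omega(B)\|$ valid for $x\in B$, and the lower-bound formula \eqref{E:3.2}, I would establish the coordinatewise estimate
\begin{align*}
0\le f_i(x)-l_i(B)\le L_i\|\omega(B)\|\le L_i\,\omega_k,\quad i=1,\ldots,m,
\end{align*}
which yields $\|u-l(B)\|\le\|L\|\,\omega_k$ with $\|L\|=(\sum_i L_i^2)^{1/2}$. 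Since $l(B)\in\mathcal{L}_k(\mathcal{C})$, this gives $\min_{l\in\mathcal{L}_k(\mathcal{C})}\|u-l\|\le\|L\|\,\omega_k$ for every $u\in\mathcal{U}_k(\mathcal{C})$, and taking the maximum over $u$ delivers $d\le\|L\|\,\omega_k$.

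Combining the two parts, $\omega_k\to 0$ forces $d\le\|L\|\,\omega_k\to 0$ as well, so there is an index beyond which both $\omega_{k-1}\le\delta$ and $d\le\varepsilon$ hold simultaneously at the loop test, and CBB halts. I expect the main obstacle to be the second part: one must argue carefully that each nondominated upper bound is paired with a lower bound that \emph{actually survives into} $\mathcal{L}_k(\mathcal{C})$ rather than merely some generic lower bound of $\mathcal{B}_k$, since it is this pairing (through $\bar{\mathcal{B}}$) that makes the Hausdorff gap collapse at the rate of the box width. By comparison, the width-convergence argument is routine, though it still requires the per-coordinate counting to rule out a starved coordinate.
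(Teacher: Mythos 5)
Your proposal is correct and follows essentially the same route as the paper: show $\omega_k\to 0$ under exhaustive bisection, and bound $d_h(\mathcal{U}_k(\mathcal{C}),\mathcal{L}_k(\mathcal{C}))$ by $\|L\|\,\omega_k$ using the Lipschitz lower-bound formula \eqref{E:3.2} and the pairing of each surviving upper bound with a box whose lower bound lies in $\mathcal{L}_k(\mathcal{C})$. You are in fact slightly more careful than the paper on both counts — the per-coordinate counting argument for $\omega_k\to 0$ is only asserted there, and the paper states the final estimate \eqref{E:3.3} with an equality where your inequality $d\le\|L\|\,\omega_k$ is what is actually proved and needed.
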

\begin{proof}
Because we divide all boxes perpendicular to a side with maximal width, $\omega_k$ decreases among the sequence of box collections, i.e., $\omega_{k} > \omega_{k+1}$ for every $k$ and converges to 0. Therefore, for a given $\delta>0$, there must exist a iteration count $\tilde{k}>0$ such that $\omega_{\tilde{k}}\leq\delta$.

Assume $\mathcal{U}_{k}(\mathcal{C})$ and $\mathcal{L}_{k}(\mathcal{C})$ are upper and lower bound set generated by CBB, respectively, and we use (\ref{E:3.2}) to calculate lower bounds. According to the way $\mathcal{U}_{k}(\mathcal{C})$ is constructed and the $\mathcal{C}$-dominance-based discarding test, for every $u\in\mathcal{U}_{k}(\mathcal{C})$, there exists a subbox $B\in\mathcal{B}_k$, such that $F^{-1}(u)\in B$ and $l(B)\in\mathcal{L}_k(\mathcal{C})$. Then, based on the Lipschitz condition, we have
\begin{align*}
d(u,\mathcal{L}_k(\mathcal{C}))\leq d(u,l(B))= \|u-(F(m(B))+\frac{L}{2}\omega_k)\|&\leq\|u-F(m(B)\|+\frac{1}{2}\omega_k\|L\|\\
&\leq\omega_k\|L\|,
\end{align*}
where $L=(L_1,\ldots,L_m)^T$ consisting of the Lipschitz constants of objectives. Hence we have
\begin{align}
d_h(\mathcal{U}_{k}(\mathcal{C}),\mathcal{L}_{k}(\mathcal{C})) = \omega_k\|L\|.\label{E:3.3}
\end{align}
Due to the fact that $\omega_k$ converges to 0, it follows that for a given $\varepsilon>0$, there must exist a iteration count $\bar{k}>0$ such that $d_h(\mathcal{U}_{k}(\mathcal{C}),\mathcal{L}_{k}(\mathcal{C}))\leq\varepsilon$.\qed
\end{proof}

The next theorem states all efficient solutions of problem \eqref{VP} with respect to the cone $\mathcal{C}$ are contained in the union of subboxes generated by the algorithm.
\begin{theorem}\label{th:8}
Let $\mathcal{C}\subseteq\mathbb{R}^m$ be a pointed closed convex cone and satisfy $\mathcal{C}\supseteq\mathbb{R}^m_+$. Let $\mathcal{M}(F(\Omega),\mathcal{C})$ be an efficient solution set of problem \eqref{VP} with respect to $\mathcal{C} $ and $\{\mathcal{B}_k\}_{k\in\mathbb{N}}$ a sequence of box collections generated by CBB. Then, for arbitrary $k\in\mathbb{N}$ we have $\mathcal{M}(F(\Omega),\mathcal{C})\subseteq\bigcup_{B\in\mathcal{B}_k} B$.
\end{theorem}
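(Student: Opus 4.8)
The plan is to argue by induction on the iteration count $k$, showing that none of the operations performed within a single iteration of CBB can delete a subbox containing an efficient solution. For the base case $k=0$ we have $\mathcal{B}_0=\{\Omega\}$, and since every efficient solution is by definition feasible, the inclusion $\mathcal{M}(F(\Omega),\mathcal{C})\subseteq\Omega=\bigcup_{B\in\mathcal{B}_0}B$ holds trivially.

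For the inductive step I would assume $\mathcal{M}(F(\Omega),\mathcal{C})\subseteq\bigcup_{B\in\mathcal{B}_{k-1}}B$ and trace the three operations that transform $\mathcal{B}_{k-1}$ into $\mathcal{B}_k$, verifying that each preserves coverage of the efficient set. First, the branching step (line 4) bisects each box $B\in\mathcal{B}_{k-1}$ into two subboxes $B_1,B_2$ with $B_1\cup B_2=B$; hence the union over the resulting collection equals $\bigcup_{B\in\mathcal{B}_{k-1}}B$ and, by the induction hypothesis, still contains $\mathcal{M}(F(\Omega),\mathcal{C})$. Second, the feasibility test (line 6) only discards subboxes that provably contain no feasible point of $\Omega$; since every efficient solution lies in $\Omega$ and is therefore feasible, no box containing an efficient solution can be removed at this stage.

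Third, and this is where the essential content lies, the discarding test (lines 18--22) removes a subbox $B$ exactly when there is an upper bound $u\in\mathcal{U}_k(\mathcal{C})$ with $u\leqq_{\mathcal{C}}l(B)$. By Theorem \ref{th:2} in the polyhedral case (recalling from Theorem \ref{th:1} that the $\epsilon$-properly Pareto optimal solutions coincide with the efficient solutions with respect to $\mathcal{C}_{\epsilon}$) and by Theorem \ref{th:6} in the general ice cream cone case, any such discarded box satisfies $B\cap\mathcal{M}(F(\Omega),\mathcal{C})=\emptyset$. Consequently, discarding cannot eliminate any efficient solution either. Combining the three steps, every element of $\mathcal{M}(F(\Omega),\mathcal{C})$ that was covered by $\bigcup_{B\in\mathcal{B}_{k-1}}B$ remains covered by $\bigcup_{B\in\mathcal{B}_k}B$, which closes the induction.

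The main obstacle is not a delicate estimate but the need to confirm that the correctness theorems apply verbatim to the sets actually manipulated by the algorithm. In particular, one must check that the upper bounds in $\mathcal{U}_k(\mathcal{C})$ are genuine feasible objective vectors $u\in F(\Omega)$---which they are, being images of feasible points returned by the MOEA on feasible subboxes---so that the hypotheses of Theorems \ref{th:2} and \ref{th:6} are met. One should also observe that the objective normalization and the coordinatewise retention exception described before this subsection can only \emph{prevent} a box from being discarded, never force it, and hence cannot threaten coverage of the efficient set.
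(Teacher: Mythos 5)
Your proof is correct and rests on the same essential fact as the paper's --- the $\mathcal{C}$-dominance-based discarding test cannot eliminate a box containing a $\mathcal{C}$-efficient solution --- but it is organized differently. The paper argues directly by contradiction: if $\bar{x}^*\in\mathcal{M}(F(\Omega),\mathcal{C})$ lay in a discarded box $B$, then some feasible objective vector $u$ satisfies $u\leqq_{\mathcal{C}} l(B)$, and combining this with the lower-bound property \eqref{IE:2.2} and Lemma \ref{le:1} gives $u\leq_{\mathcal{C}} F(\bar{x}^*)$ by transitivity of the cone ordering, contradicting efficiency. You instead run an induction over iterations and delegate the key step to the correctness results, Theorems \ref{th:2} and \ref{th:6}. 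Your explicit bookkeeping of the branching and feasibility-test steps, and your remark that the normalization and the coordinate-retention exception can only \emph{prevent} a box from being discarded, are points the paper's proof passes over silently, so your version is in that respect more careful. The one thing to watch is generality: Theorem \ref{th:8} is stated for an arbitrary pointed closed convex cone $\mathcal{C}\supseteq\mathbb{R}^m_+$, whereas Theorems \ref{th:2} and \ref{th:6} cover only the two concrete families $\mathcal{C}_{\epsilon}$ and $\mathcal{C}_{(w,\theta)}$ actually instantiated by CBB. To establish the theorem at its stated generality you should either observe that the argument of those theorems transfers verbatim to any such cone (it uses only \eqref{IE:2.2}, Lemma \ref{le:1}, and transitivity), or simply run that short argument inline for general $\mathcal{C}$ --- which is exactly what the paper's proof does.
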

\begin{proof}
Let us suppose that there exists $k\in\mathbb{N}$ and an efficient solution $\bar{x}^*\in \mathcal{M}(F(\Omega),\mathcal{C})$ such that $\bar{x}^*\notin\bigcup_{B\in\mathcal{B}_k} B$, meaning that $\bar{x}^*$ is in a removed subbox $B$ in the pervious iteration. From the $\mathcal{C}$-dominance-based discarding test, we know that $B$ will be discarded if and only if there exists a feasible objective vector $u$ such that $u\leqq_{\mathcal{C}} l(B)$. According to \eqref{IE:2.2}, we then have $l(B)\leq_{\mathcal{C}} F(\bar{x}^*)$. Thus we know that $u\leq_{\mathcal{C}} F(\bar{x}^*)$, which contradicts the assumption that $\bar{x}^*\in \mathcal{M}(F(\Omega),\mathcal{C})$.\qed
\end{proof}

\begin{corollary}
Let $\mathcal{C}\subseteq\mathbb{R}^m$ be a pointed closed convex cone, which satisfies $\mathcal{C}\supseteq\mathbb{R}^m_+$. Let $\mathcal{M}(F(\Omega),\mathcal{C})$ be an efficient solution set of problem \eqref{VP} with respect to the cone $\mathcal{C} $ and $\{\mathcal{B}_k\}_{k\in\mathbb{N}}$ a sequence of box collections generated by CBB. For each $x^*\in \mathcal{M}(F(\Omega),\mathcal{C})$ and $k\in\mathbb{N}$, these exists $B(x^*,k)\in\mathcal{B}_k$, such that $x^*\in B(x^*,k)$. Furthermore, we have $\lim\limits_{k\rightarrow \infty}d_H(\mathcal{M}(F(\Omega),\mathcal{C}),\bigcup_{x\in \mathcal{M}(F(\Omega),\mathcal{C})}B(x,k))=0$.
\end{corollary}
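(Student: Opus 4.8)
The plan is to obtain the first assertion immediately from Theorem \ref{th:8}, and then to bound the Hausdorff distance by the common subbox diameter $\omega_k$, which vanishes in the limit by Theorem \ref{th:7}.

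First I would fix $k\in\mathbb{N}$ and invoke Theorem \ref{th:8}, which gives $\mathcal{M}(F(\Omega),\mathcal{C})\subseteq\bigcup_{B\in\mathcal{B}_k}B$. Hence every $x^*\in\mathcal{M}(F(\Omega),\mathcal{C})$ lies in some subbox of $\mathcal{B}_k$; choosing one such subbox and naming it $B(x^*,k)$ establishes the existence claim.

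Next, writing $A:=\mathcal{M}(F(\Omega),\mathcal{C})$ and $U_k:=\bigcup_{x\in A}B(x,k)$ for brevity, I would split the Hausdorff distance as $d_H(A,U_k)=\max\{d_h(A,U_k),d_h(U_k,A)\}$ and treat the two directed distances separately. Since $x^*\in B(x^*,k)\subseteq U_k$ for every $x^*\in A$, we have $A\subseteq U_k$, so $d_h(A,U_k)=0$. For the other direction, I would take an arbitrary $y\in U_k$; by construction $y\in B(x^*,k)$ for some $x^*\in A$. The parallel breadth-first branching of CBB bisects every box perpendicular to a side of maximal width, so all boxes in $\mathcal{B}_k$ have the identical diameter $\omega_k=\|\omega(B(x^*,k))\|$; since the diameter of a box bounds the distance between any two of its points, $\|y-x^*\|\leq\omega_k$. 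Therefore $d(y,A)\leq\|y-x^*\|\leq\omega_k$, and taking the supremum over $y\in U_k$ yields $d_h(U_k,A)\leq\omega_k$.

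Combining the two bounds gives $d_H(A,U_k)\leq\omega_k$, and since $\omega_k\to0$ as $k\to\infty$ by Theorem \ref{th:7}, the desired limit $\lim_{k\to\infty}d_H(A,U_k)=0$ follows. The step I expect to be the crux is the uniform-diameter property: it is precisely the synchronized breadth-first bisection that forces every $B\in\mathcal{B}_k$ to share the single diameter $\omega_k$, so that one quantity controls all intra-box distances at once. A minor technical caveat is that $A$ and $U_k$ are in general infinite, so the $\min$ in the definition of $d_h$ must be read as an infimum; this does not alter any of the inequalities above.
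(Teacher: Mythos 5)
Your proposal is correct and follows essentially the same route as the paper's own proof: the containment $\mathcal{M}(F(\Omega),\mathcal{C})\subseteq\bigcup_{x}B(x,k)$ from Theorem \ref{th:8} kills one directed distance, and the uniform box diameter $\omega_k\to 0$ bounds the other. The only cosmetic difference is that you attribute $\omega_k\to 0$ to Theorem \ref{th:7}, whereas that fact is actually established inside its proof rather than in its statement; this does not affect the argument.
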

\begin{proof}
The first conclusion is guaranteed by Theorem \ref{th:8}. Next we would like to prove the second conclusion.

On the one hand, from the first conclusion, we have
\begin{align*}
d(x^*,\bigcup_{x\in \mathcal{M}(F(\Omega),\mathcal{C})}B(x,k)) =0, \quad x^*\in \mathcal{M}(F(\Omega),\mathcal{C}),~k\in\mathbb{N},
\end{align*}
it follows $\lim\limits_{k\rightarrow \infty}d_h(\mathcal{M}(F(\Omega),\mathcal{C}),\bigcup_{x\in \mathcal{M}(F(\Omega),\mathcal{C})}B(x,k))=0$.

On the other hand, for each $x\in\bigcup_{x\in \mathcal{M}(F(\Omega),\mathcal{C})}B(x,k)$, there exists $B(\hat{x}^*,k)\in\bigcup_{x\in \mathcal{M}(F(\Omega),\mathcal{C})}B(x,k)$, such that $x\in B(\hat{x}^*,k)$. We then have
\begin{align*}
0\leq\lim\limits_{k\rightarrow \infty}d(x,\mathcal{M}(F(\Omega),\mathcal{C}))\leq\lim\limits_{k\rightarrow \infty}d(x,\hat{x}^*)\leq\lim\limits_{k\rightarrow \infty} w_k =0,
\end{align*}
meaning that $\lim\limits_{k\rightarrow \infty}d_h(\bigcup_{x\in \mathcal{M}(F(\Omega),\mathcal{C})}B(x,k), \mathcal{M}(F(\Omega),\mathcal{C}))=0$. The second conclusion is proven. \qed
\end{proof}

In the next theorem, we prove that the solution set generated by Algorithm \ref{alg:4} is an $\varepsilon e$-efficient solution set of problem \eqref{VP}, where $e$ denotes the $m$-dimensional all-ones vector $(1,\ldots,1)^T\in\mathbb{R}^m$.

\begin{theorem}\label{th:9}
Let $\mathcal{C}\subseteq\mathbb{R}^m$ be a pointed closed convex cone and satisfy $\mathcal{C}\supseteq\mathbb{R}^m_+$. For given predefined parameters $\varepsilon>0$ and $\delta>0$. Assume that the upper bounds are the images of the midpoints of subboxes. Let $\mathcal{X}(\mathcal{C})$ be the solution set with respect to $\mathcal{C}$ generated by CBB and $\mathcal{L}_k(\mathcal{C})$ the nondominated lower bound set with respect to $\mathcal{C}$. Then we have $\mathcal{X}(\mathcal{C})$ is an $\varepsilon e$-efficient solution set of problem \eqref{VP} with respect to $\mathcal{C}$.
\end{theorem}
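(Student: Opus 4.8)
The plan is to argue by contradiction, reducing $\varepsilon e$-efficiency to the nondominatedness of the lower bound set $\mathcal{L}_k(\mathcal{C})$ at the terminating iteration $k$. Fix $\bar{x}\in\mathcal{X}(\mathcal{C})$ and set $\bar{u}:=F(\bar{x})\in\mathcal{U}_k(\mathcal{C})$. Since the upper bounds are images of midpoints, $\bar{u}=F(m(\bar{B}))$ for a subbox $\bar{B}\in\bar{\mathcal{B}}\subseteq\mathcal{B}_k$, and because $\bar{\mathcal{B}}$ consists exactly of the boxes whose lower bounds are nondominated, its lower bound $\bar{l}:=l(\bar{B})$ lies in $\mathcal{L}_k(\mathcal{C})$. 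Note also that $\varepsilon e\in\mathbb{R}^m_+\backslash\{0\}\subseteq\mathcal{C}\backslash\{0\}$, so the definition of $\varepsilon e$-efficiency is meaningful here.

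First I would quantify the gap between $\bar{u}$ and its paired lower bound. By the lower bound formula \eqref{E:3.2}, $\bar{u}-\bar{l}=\tfrac{1}{2}\|\omega(\bar{B})\|L$ with $L=(L_1,\ldots,L_m)^T$, so $\|\bar{u}-\bar{l}\|=\tfrac{1}{2}\|\omega(\bar{B})\|\|L\|\leq\tfrac{1}{2}\omega_k\|L\|$. The termination criterion guarantees $d_h(\mathcal{U}_k(\mathcal{C}),\mathcal{L}_k(\mathcal{C}))\leq\varepsilon$, and by \eqref{E:3.3} this distance equals $\omega_k\|L\|$; hence $\|\bar{u}-\bar{l}\|\leq\tfrac{1}{2}\varepsilon\leq\varepsilon$. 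Since every coordinate satisfies $0\leq\bar{u}_i-\bar{l}_i\leq\|\bar{u}-\bar{l}\|\leq\varepsilon$, I obtain the componentwise inequality $\bar{u}-\varepsilon e\leqq\bar{l}=l(\bar{B})$.

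Next, suppose toward a contradiction that $\bar{x}$ is not $\varepsilon e$-efficient, i.e., there is $x\in\Omega$ with $F(x)\leqq_{\mathcal{C}}F(\bar{x})-\varepsilon e=\bar{u}-\varepsilon e$. Applying Lemma \ref{le:1} to $\bar{u}-\varepsilon e\leqq\bar{l}$ upgrades it to $\bar{u}-\varepsilon e\leqq_{\mathcal{C}}\bar{l}$, and transitivity of the partial order $\leqq_{\mathcal{C}}$ (valid since $\mathcal{C}$ is a pointed closed convex cone) then yields $F(x)\leqq_{\mathcal{C}}l(\bar{B})$. Thus $x$ furnishes a feasible objective vector that $\mathcal{C}$-dominates the lower bound of $\bar{B}$, which by the $\mathcal{C}$-dominance-based discarding test ought to contradict the fact that $\bar{B}$ was retained and $l(\bar{B})$ survived in $\mathcal{L}_k(\mathcal{C})$.

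The step I expect to be the main obstacle is making this final contradiction rigorous, because the dominating point $x$ need not coincide with any computed upper bound nor even lie in a box surviving to iteration $k$. To close this gap I would establish a covering property: at every iteration each feasible point is either contained in a surviving box of $\mathcal{B}_k$ or is $\mathcal{C}$-dominated by the midpoint image of such a box. This follows by induction on the iteration count from the discarding rule---whenever a box was discarded, some feasible upper bound $\mathcal{C}$-dominated its lower bound, and the pointedness of $\mathcal{C}$ (antisymmetry of $\leqq_{\mathcal{C}}$) together with the finiteness of each box collection rules out an infinite descent. Granting this, there is a surviving box $B'\in\mathcal{B}_k$ with $l(B')\leqq_{\mathcal{C}}F(x)\leqq_{\mathcal{C}}l(\bar{B})$, where \eqref{IE:2.2} and Lemma \ref{le:1} make the first relation strict; hence $l(B')\leq_{\mathcal{C}}l(\bar{B})$ with $l(B')\in\mathcal{L}$, contradicting $l(\bar{B})\in\mathcal{L}_k(\mathcal{C})$. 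Therefore no such $x$ exists and every $\bar{x}\in\mathcal{X}(\mathcal{C})$ is $\varepsilon e$-efficient with respect to $\mathcal{C}$.
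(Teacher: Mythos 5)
Your proposal is correct and its core is the same as the paper's: you use the termination criterion together with \eqref{E:3.3} to get $\|\bar{u}-l(\bar{B})\|\leq\tfrac{1}{2}\varepsilon$, deduce the componentwise bound $F(\bar{x})-\varepsilon e\leqq l(\bar{B})$, upgrade it to $\leqq_{\mathcal{C}}$ via Lemma \ref{le:1}, and derive a contradiction with the nondominatedness of $\mathcal{L}_k(\mathcal{C})$. Where you genuinely depart from the paper is in the treatment of the hypothetical dominator $x$: the paper's proof only considers $x$ lying in the retained box $B$ or in another box $B'\in\mathcal{B}_k$, i.e.\ it silently restricts to $x\in\bigcup_{B\in\mathcal{B}_k}B$, whereas the definition of $\varepsilon e$-efficiency quantifies over all of $\Omega$. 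You correctly identify this as the delicate point and close it with a covering property established by chasing the discarding chain; your sketch is sound, since each step of the chain passes from a discarded point to the (feasible) preimage of the upper bound that discarded it, the relation $\leq_{\mathcal{C}}$ is a strict partial order by pointedness and convexity, and a box whose midpoint image lies in the current nondominated upper bound set cannot be discarded in that same iteration, so the chain advances strictly in the iteration index and must terminate in a surviving box of $\mathcal{B}_k$ within at most $k$ steps. This buys a complete argument at the cost of an induction the paper avoids; a shorter patch with the same effect is to invoke compactness of $\Omega$ to obtain an efficient point $x^*\in\mathcal{M}(F(\Omega),\mathcal{C})$ with $F(x^*)\leqq_{\mathcal{C}}F(x)$ and then apply Theorem \ref{th:8}, which places $x^*$ in a surviving box directly. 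Either way, the covering step should be written out in full rather than left as a sketch, since it is precisely the part that does not reduce to the paper's two displayed cases.
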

\begin{proof}
  It is easy to see $\varepsilon e\in\mathcal{C}\backslash\{0\}$. Assume that $\tilde{x}\in \mathcal{X}(\mathcal{C})$ is the midpoint of the subbox $B\in\mathcal{B}_k$ and $l\in\mathcal{L}_k(\mathcal{C})$ is corresponding lower bound. According to \eqref{E:3.3}, we know that
  \begin{align}
    \varepsilon\geq \frac{1}{2}\omega_k\|L\|>\frac{1}{2}\omega_k L_{{\rm max}},\label{IE:3.4}
  \end{align}
  where $L_{max}=\max\{L_i,i=1,\dots,m\}$. Then we can obtain a lower bound $\tilde{l}=(\tilde{l}_1,\ldots,\tilde{l}_m)^T$ whose component can be calculated by
  \begin{align*}
    \tilde{l}_i=f(\tilde{x})_i-\frac{1}{2}\omega_k L_{{\rm max}},
  \end{align*}
  and further, it is easy to see that $F(\tilde{x})-\varepsilon e <\tilde{l}\leqq l$.

  In the following we will prove $\tilde{x}$ is an $\varepsilon$-efficient solution of problem \eqref{VP} in two aspects. On the one hand, by \eqref{IE:2.2}, we have
  \begin{align*}
    F(\tilde{x})-\varepsilon e <\tilde{l}\leqq l\leq F(x),\quad x\in B,
  \end{align*}
  following $F(\tilde{x})-\varepsilon e <_{\mathcal{C}}F(x)$ by Lemma \ref{le:1}. Therefore, there does not exist $x\in B$ with $F(x)\leqq_{\mathcal{C}} F(\tilde{x})-\varepsilon e$.

  On the other hand, suppose that there exists another subbox $B'\in\mathcal{B}_k\backslash B$ and a feasible point $x'\in B'$ such that $F(x')\leqq_{\mathcal{C}} F(\tilde{x})-\varepsilon e$, and thus we have
  \begin{align*}
  F(x')-\frac{1}{2}\omega_k\|L\|e\leq_\mathcal{C} F(x')\leqq_\mathcal{C} F(\tilde{x})-\varepsilon e.
  \end{align*}
  By Lemma \ref{le:1}, we have
    \begin{align*}
    F(\tilde{x})-\varepsilon e<_\mathcal{C}\tilde{l}\leqq_\mathcal{C} l,
  \end{align*}
  following $F(x')-\frac{1}{2}\omega_k\|L\|e<_\mathcal{C} l,$ which is a contradiction to the fact $\mathcal{L}_k(\mathcal{C})$ is a nondominated lower bound set with respect to $\mathcal{C}$. Therefore, for any subbox $B'\in\mathcal{B}_k\backslash B$, there does not exist a point $x'\in B'$ with $F(x')\leqq_\mathcal{C} F(\tilde{x})-\varepsilon e$. Now, we prove the conclusion. \qed
\end{proof}


\section{Experimental Results}

CBB is implemented in Python 3.8 with fundamental packages such as numpy, scipy and multiprocessing. It is executed on a computer with Intel(R) Core(TM) i7-10700 CPU and 32 GB of RAM, running the Windows 10 Professional operating system. In all experiments, MOEA/D \cite{ref20}  is employed with a population size of 10 and a generation number of 20 in CBB. For the polyhedral cone $\mathcal{C}_{\epsilon}$, we set the value of $\epsilon$ to 0.75 and refer to it as $\mathcal{C}_{0.75}$. For the general ice cream cone $\mathcal{C}_{(w,\theta)}$, we set $\theta_1=\arccos\frac{1-\epsilon}{\sqrt{m(m-1)\epsilon^2+m(1+(m-2)\epsilon)^2}}$ or $\theta_2=\arccos\frac{(m-1)(1-\epsilon)}{\sqrt{m(m-1)+m(m-1)^2\epsilon^2}}$, where $\epsilon=0.75$ and $m$ is the number of objectives. As a result, if we set $w=(0.5,0.5,\ldots,0.5)^T\in\mathbb{R}^m$, then the cross sections of $\mathcal{C}_{(w,\theta_1)}$ and $\mathcal{C}_{(w,\theta_2)}$ are the circumscribed and inscribed circles of the cross section of $\mathcal{C}_{\epsilon}$, respectively. Furthermore, all Pareto optimal sets of the following problems are approximated by CBB with $\mathcal{C}_{0}$, that is, we set $\epsilon=0$.


\subsection{Test problems DEB2DK and DEB3DK}
First, we consider the 2-objective 5-variable DEB2DK and 3-objective 3-variable DEB3DK problems \cite{ref3}. Both problems use a parameter $K$ to regulate the number of bulges in the Pareto front. In the course of our experiments, we set the parameter $K$ to 4 for DEB2DK and 1 for DEB3DK. In DEB2DK, the precision parameters are set to $(\varepsilon,\delta)=(0.0015,0.00015)$, while in DEB3DK, they are set to $(\varepsilon,\delta)=(0.006,0.008)$. The experimental results are presented in Fig.~\ref{fig2}. The Pareto fronts (blue dots) are identified by CBB using the cone $\mathcal{C}_0$. The red stars represent the upper bounds obtained by CBB with the cone $\mathcal{C}_{0.75}$. It is evident that the upper bounds generated by CBB are concentrated in the bulge of the Pareto front. In the theoretical analysis, the algorithm searches for $\epsilon$-properly Pareto optimal solutions with bounded trade-offs, which are distributed exactly on the bulge of the Pareto front.

\begin{figure}[htbp]
\centering
\subfigure[DEB2DK]{
\includegraphics[width=0.45\textwidth]{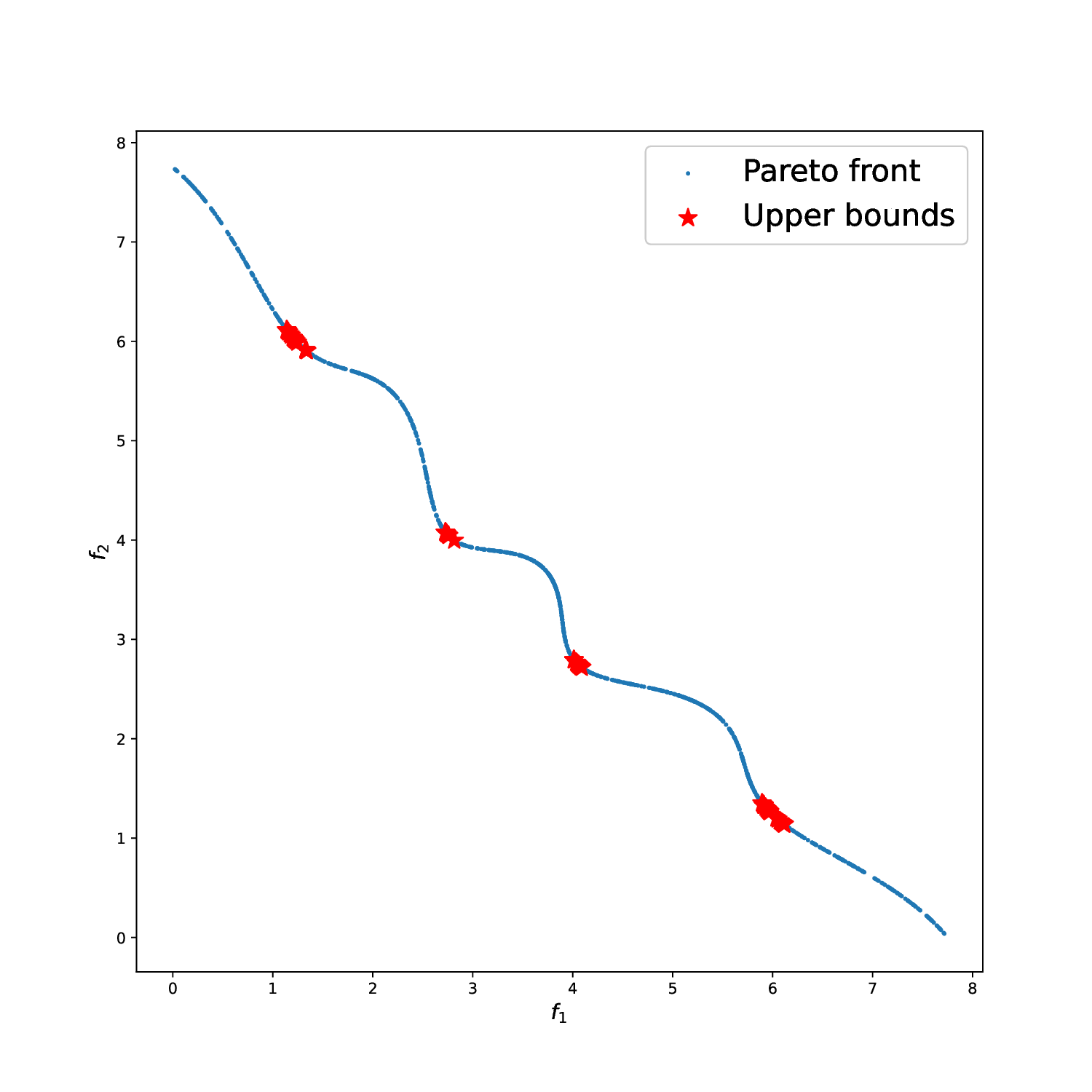}}
\subfigure[DEB3DK]{
\includegraphics[width=0.45\textwidth]{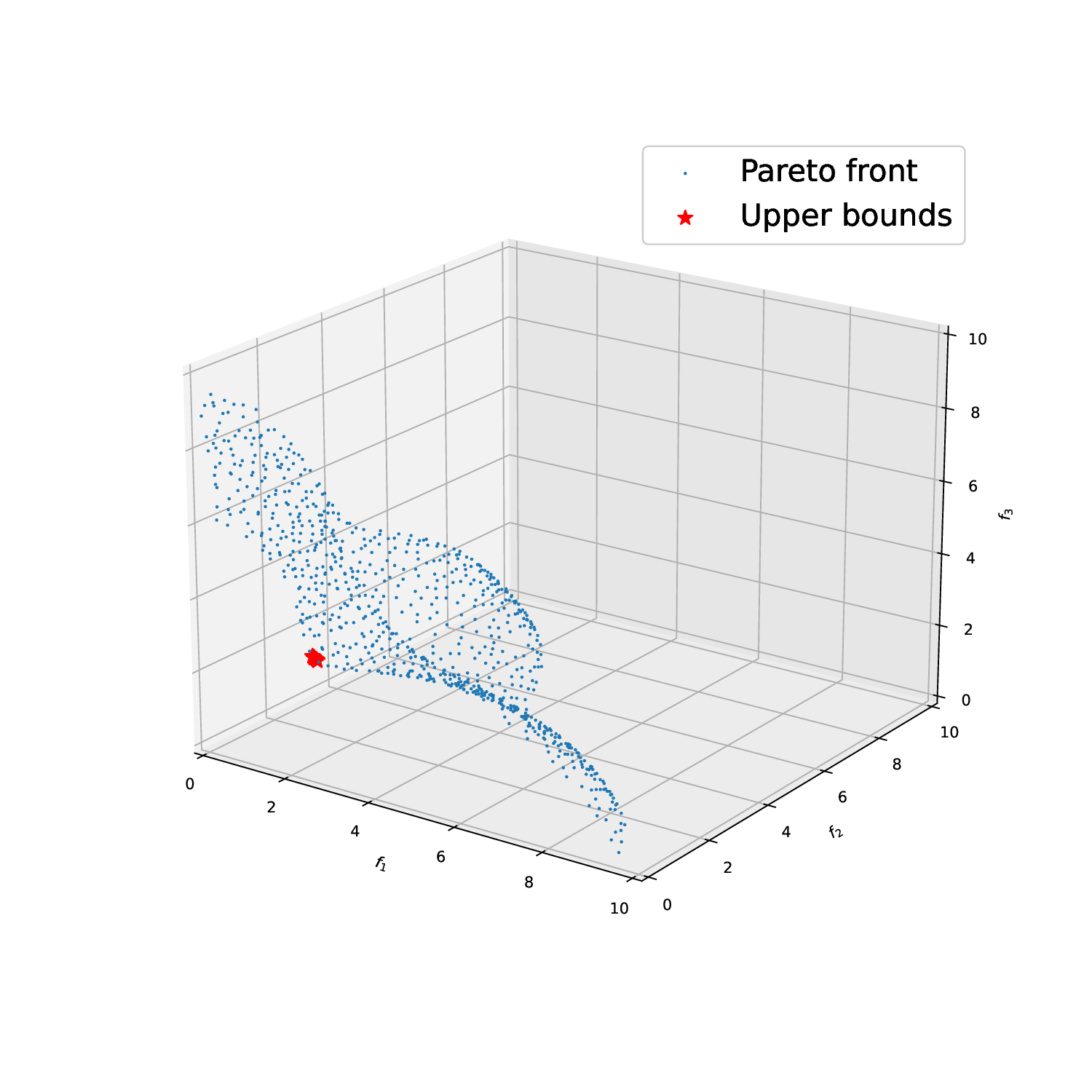}}
\caption{Results of CBB on DEB2DK and DEB3DK.}\label{fig2}
\end{figure}

\subsection{Scaled problems}

The modified test problems from \cite{ref37} are considered next. One of the problem TP1 is given as follows:
$$F(x)=
\begin{pmatrix}
k_1(x_1-1)^2+k_1(x_2-1)^2\\
k_2(x_1+1)^2+k_2(x_2+1)^2
\end{pmatrix},x\in[-2,2]^2.$$
TP2 is given as follows:
$$F(x)=
\begin{pmatrix}
0.5k_1(\sqrt{1+(x_1+x_2)^2}+\sqrt{1+(x_1-x_2)^2+x_1-x_2})+k_1e^{-(x_1-x_2)^2}\\
0.5k_2(\sqrt{1+(x_1+x_2)^2}+\sqrt{1+(x_1-x_2)^2-x_1+x_2})+k_2e^{-(x_1-x_2)^2}
\end{pmatrix},x\in[-1.5,1.5]^2.$$

To test the effectiveness of the normalization technique, we multiplied the two objectives of each problem by the scale factors $k_1=0.1$ and $k_2=10$, respectively. We still use CBB with $\mathcal{C}_{0.75}$ to solve these two problems. The experimental results are shown in Fig.~\ref{fig3}. Unsurprisingly, CBB accurately finds the bulge in the Pareto front even though the objectives are scaled. Furthermore, Figs. \ref{fig3}(c) and \ref{fig3}(d) show the number of subboxes produced by CBB with $\mathcal{C}_{0.75}$ and CBB with $\mathcal{C}_{0}$ in each iteration on two problems, respectively. It is easy to see that when the $\mathcal{C}_{0.75}$ dominance-based discarding test is used, the number of subboxes is significantly reduced in each iteration, meaning that the computational cost of CBB with $\mathcal{C}_{0.75}$ is low.

\begin{figure}[htbp]
\centering
\subfigure[TP1]{
\includegraphics[width=0.45\textwidth]{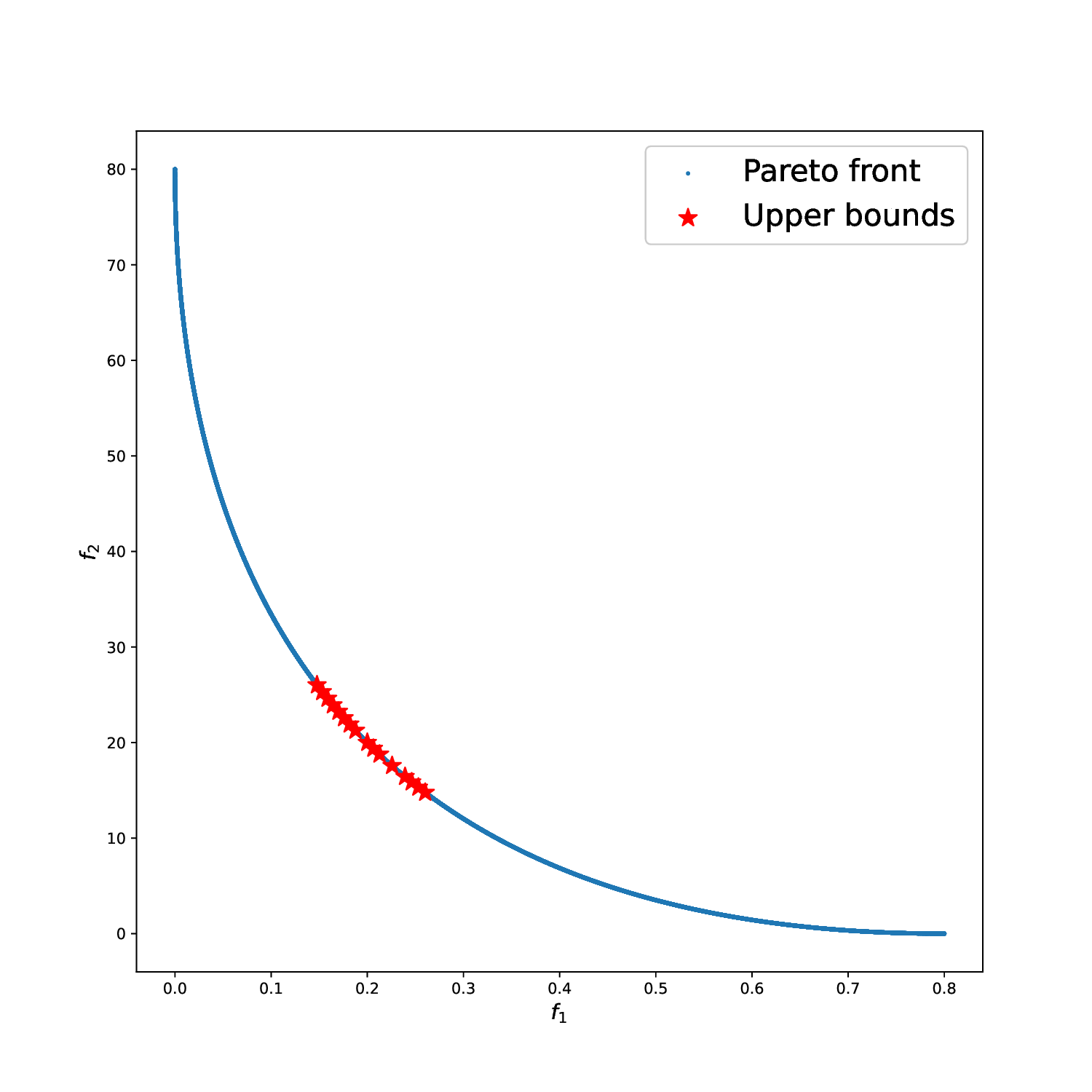}}
\subfigure[TP2]{
\includegraphics[width=0.45\textwidth]{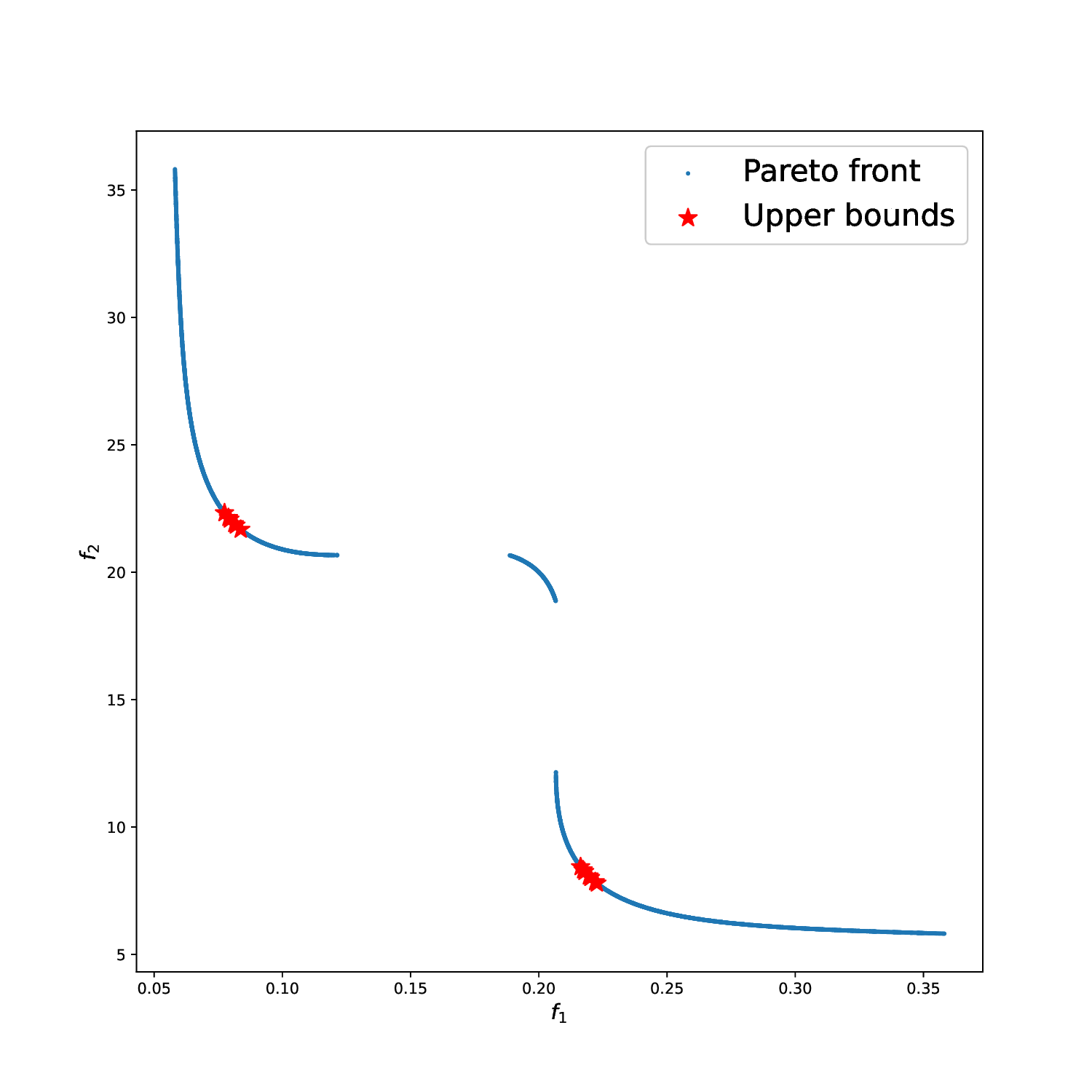}}
\subfigure[the curves of the number of subboxes on TP1]{
\includegraphics[width=0.45\textwidth]{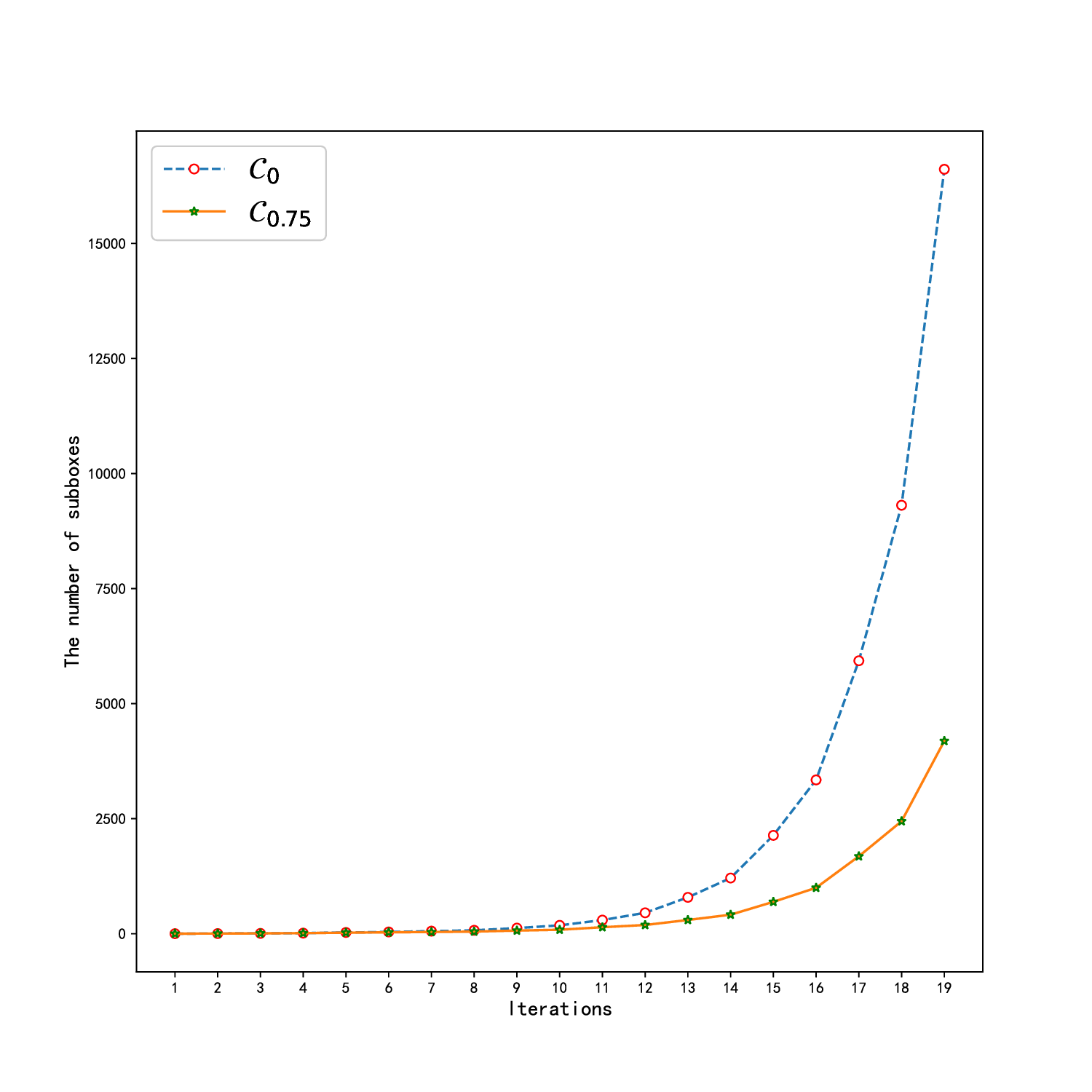}}
\subfigure[the curves of the number of subboxes on TP2]{
\includegraphics[width=0.45\textwidth]{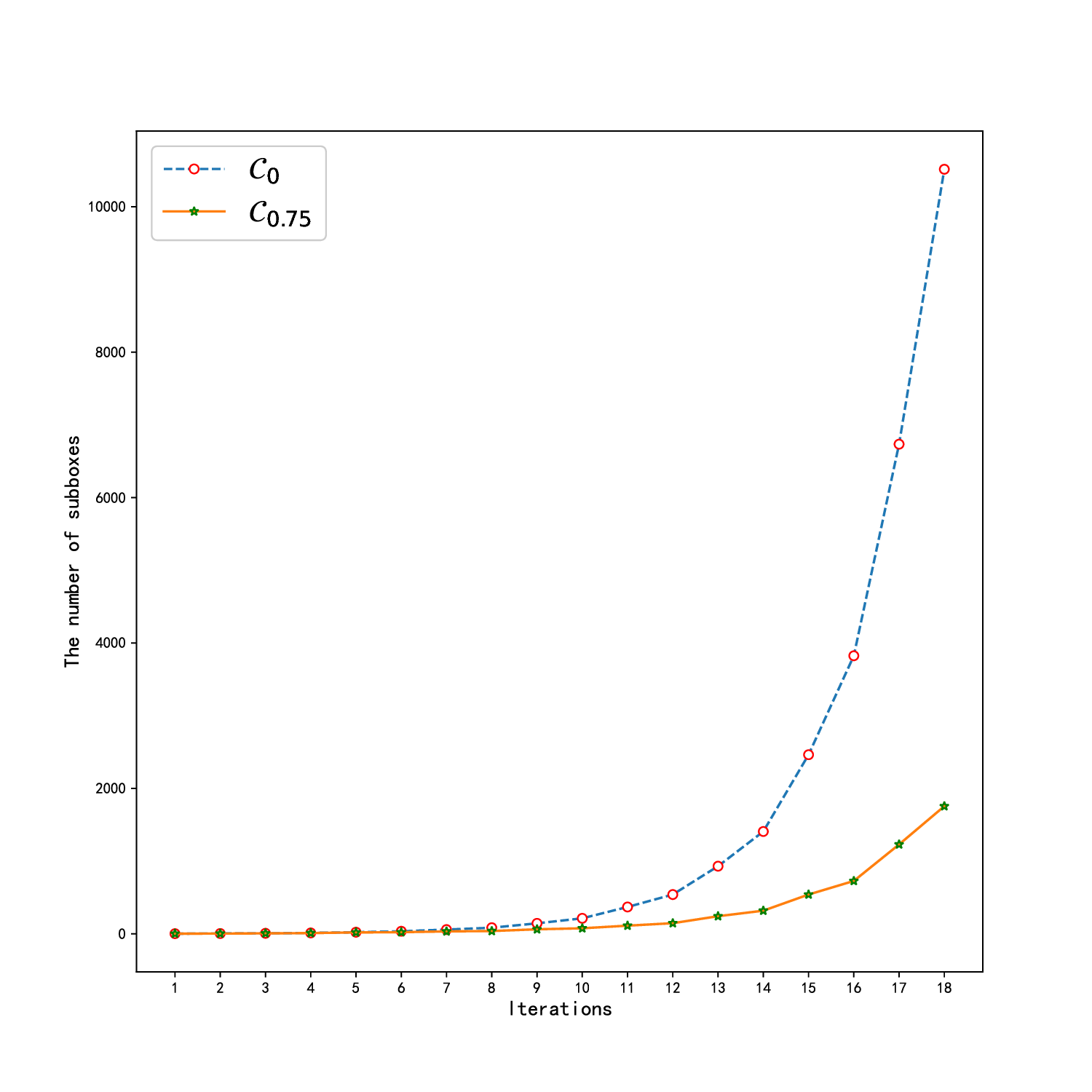}}
\caption{Results of CBB on scaled problems.}\label{fig3}
\end{figure}

\subsection{Test problems PEs}
We then attempt to solve 3-objective PE problems mentioned in \cite{ref64} by CBB. The PE1 is given as follows:
$$F(x)=\begin{pmatrix}
f_1(x)\\
f_2(x)\\
f_3(x)
\end{pmatrix}=
\begin{pmatrix}
\sum_{i=1}^n (x_i-a_i^{(1)})^2\\
\sum_{i=1}^n (x_i-a_i^{(2)})^2\\
\sum_{i=1}^n (x_i-a_i^{(3)})^2
\end{pmatrix},x\in[-2,2]^3,$$
where we set $a^{(1)}=(1,1,1)^T$, $a^{(2)}=(-1,-1,-1)^T$, and $a^{(3)}=(1,-1,1)^T$. The PE2 is given as follows:
$$F(x)=\begin{pmatrix}
f_1(x)+\frac{|f_1(x)+f_2(x)-12|}{2\sqrt{6}}\|x+e_2\|^2\\
f_2(x)+\frac{|f_1(x)+f_2(x)-12|}{2\sqrt{6}}\|x+e_2\|^2\\
f_3(x)
\end{pmatrix},x\in[-2,2]^3,$$
where $e_2=(0,1,0)^T$, and $f_j(x)$, $j=1,2,3$ are defined in PE1. Fig. \ref{fig4}(a) and (b) show the results of CBB with $\mathcal{C}_{(w,\theta_1)}$.

Next, we set $a^{(1)}=(-1,1,1)^T$, $a^{(2)}=(1,-1,1)^T$, and $a^{(3)}=(1,1,-1)^T$ in PE1, and we call this new problem PE3. To investigate the effect of cones on the distribution of the objective vectors, we use the polyhedral cone $\mathcal{C}_{0.75}$ and two ice cream cones $\mathcal{C}_{(w,\theta_1)}$ and $\mathcal{C}_{(w,\theta_2)}$. As mentioned above, this setup makes the three cones satisfy $\mathcal{C}_{(w,\theta_2)}\subseteq\mathcal{C}_{0.75}\subseteq\mathcal{C}_{(w,\theta_1)}$. Fig. \ref{fig4}(c) shows the results of CBB with three cones on PE3. Objective vectors with $\mathcal{C}_{(w,\theta_2)}$ (red dots) are shown on the Pareto front. Objective vectors with $\mathcal{C}_{(w,\theta_1)}$ (blue stars) and to $\mathcal{C}_{0.75}$ (green crosses) are shown with an offset to the Pareto front. It is easy to see that the larger the cone, the smaller the distribution of solutions.

To investigate the effect of different directions of the ice cream cone on the distribution of solutions, we use three different directions: $w_1=(0.1,0.5,0.5)^T$, $w_2=(0.5,0.1,0.5)^T$ and $w_3=(0.5,0.5,0.1)^T$. Fig. \ref{fig4}(d) shows the influence of the directions on the distribution of solutions. As expected, the algorithm correctly finds the solutions corresponding to the directions.
\begin{figure}[htbp]
\centering
\subfigure[PE1]{
\includegraphics[width=0.45\textwidth]{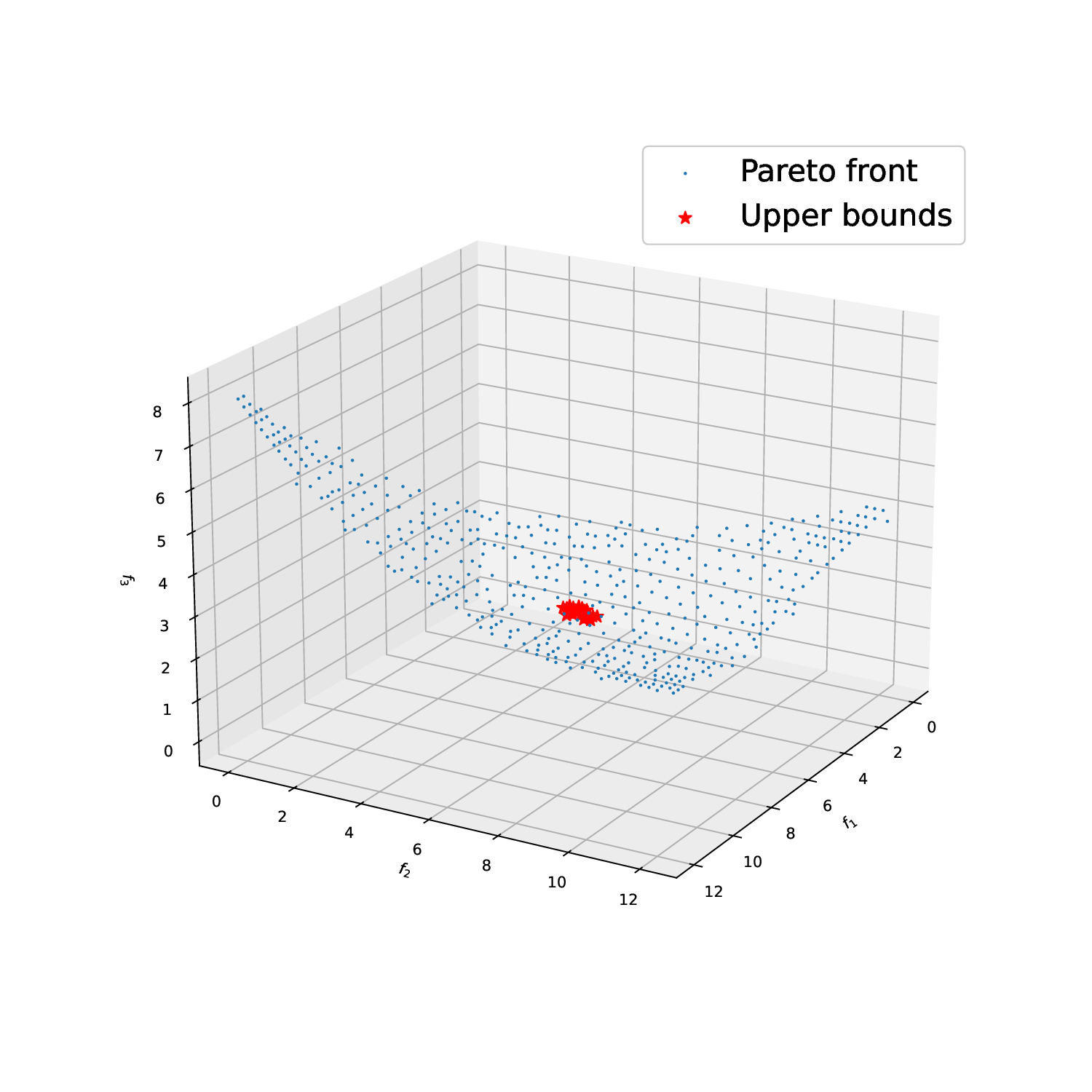}}
\subfigure[PE2]{
\includegraphics[width=0.45\textwidth]{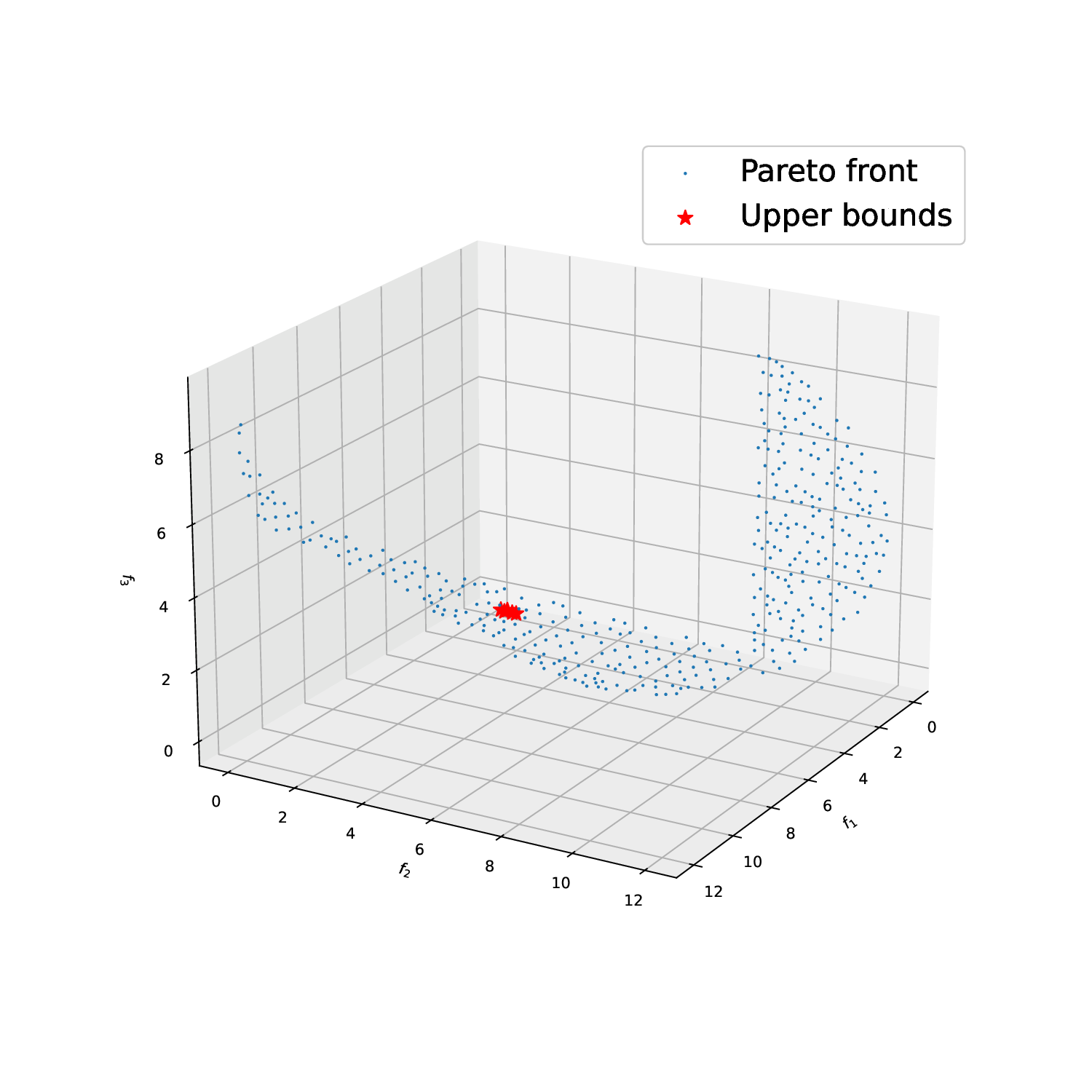}}
\subfigure[different cones]{
\includegraphics[width=0.45\textwidth]{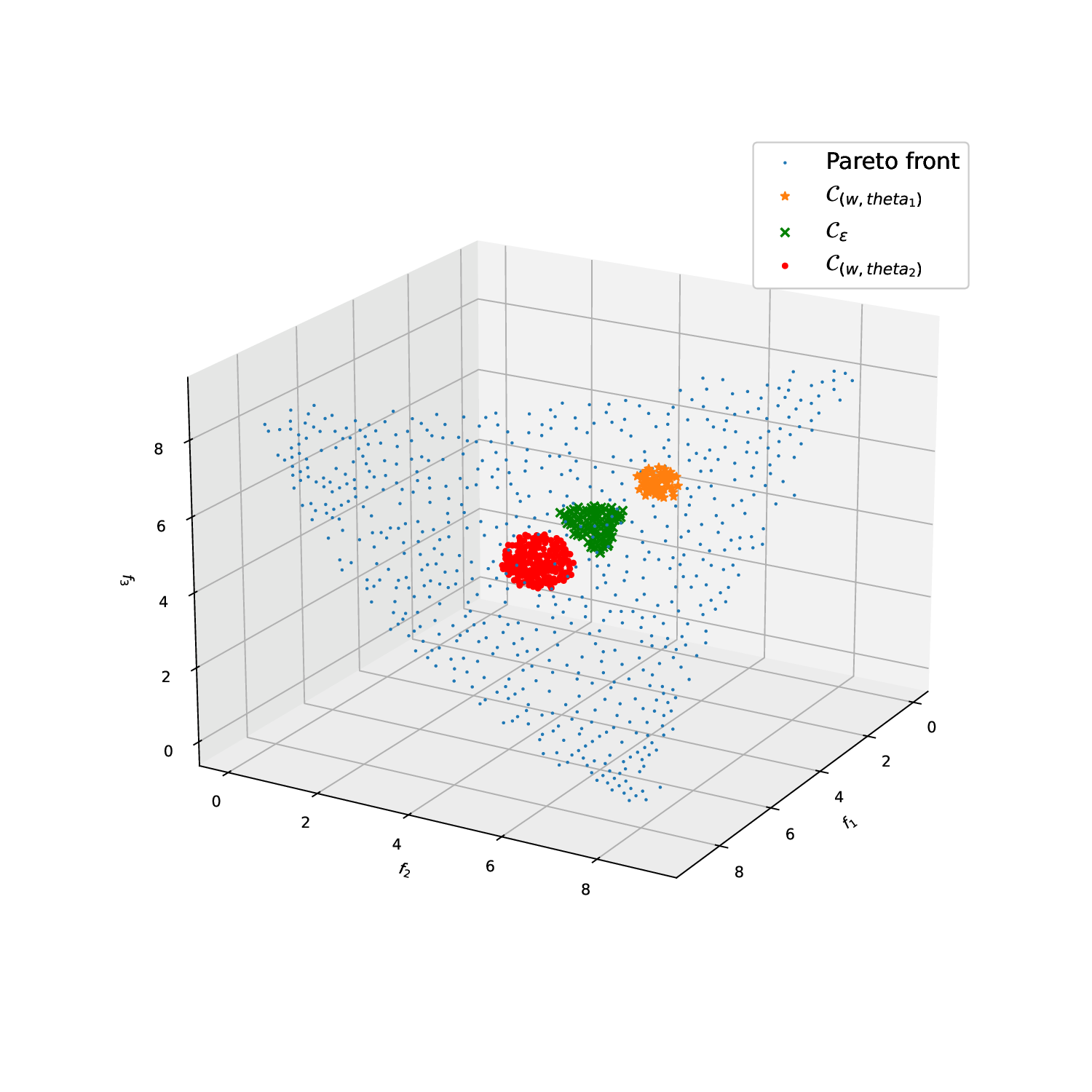}}
\subfigure[different directions]{
\includegraphics[width=0.45\textwidth]{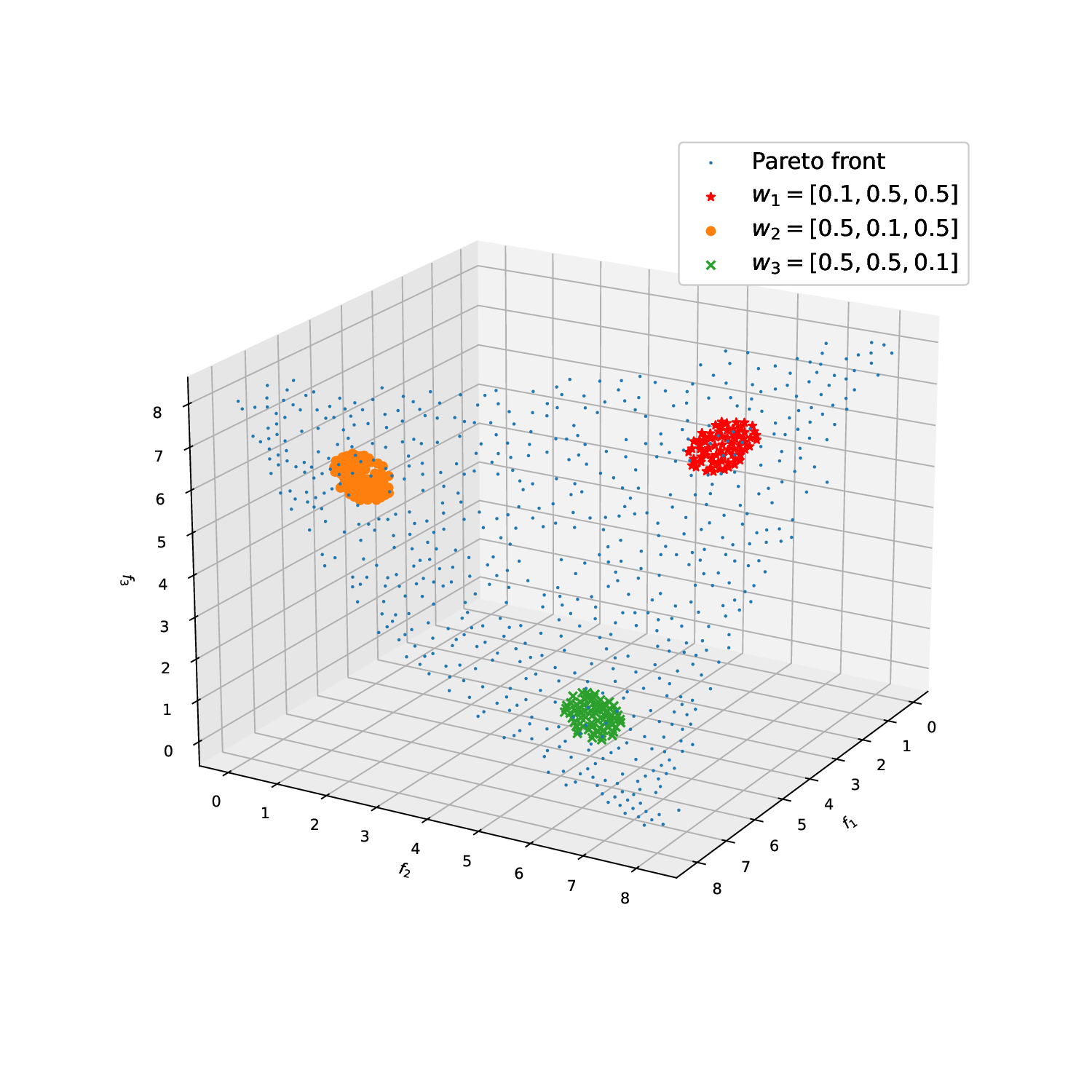}}
\caption{Results of CBB on PEs.}\label{fig4}
\end{figure}

\subsection{Constrained test problems}
We now consider three classical constrained test problems: SRN \cite{ref65}, CONSTR \cite{ref65} and KITA \cite{ref66}. We use CBB with $\mathcal{C}_{0.75}$ to solve these problems. Fig. \ref{fig5} provides the
results. It can be clearly seen that the inequality constraints do not cause any difficulty for the proposed algorithm.

\begin{figure}[htbp]
\centering
\subfigure[SRN]{
\includegraphics[width=0.30\textwidth]{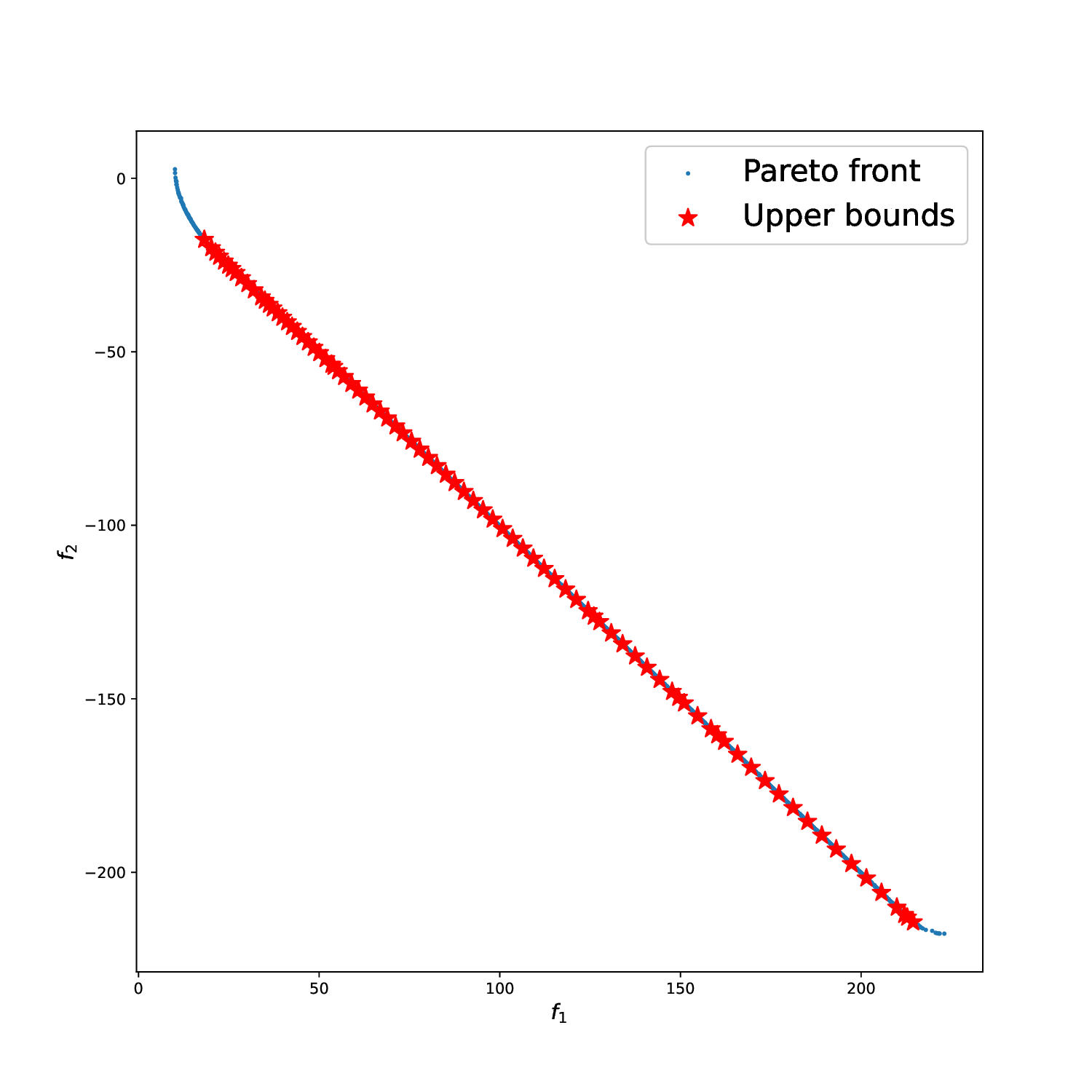}}
\subfigure[CONSTR]{
\includegraphics[width=0.30\textwidth]{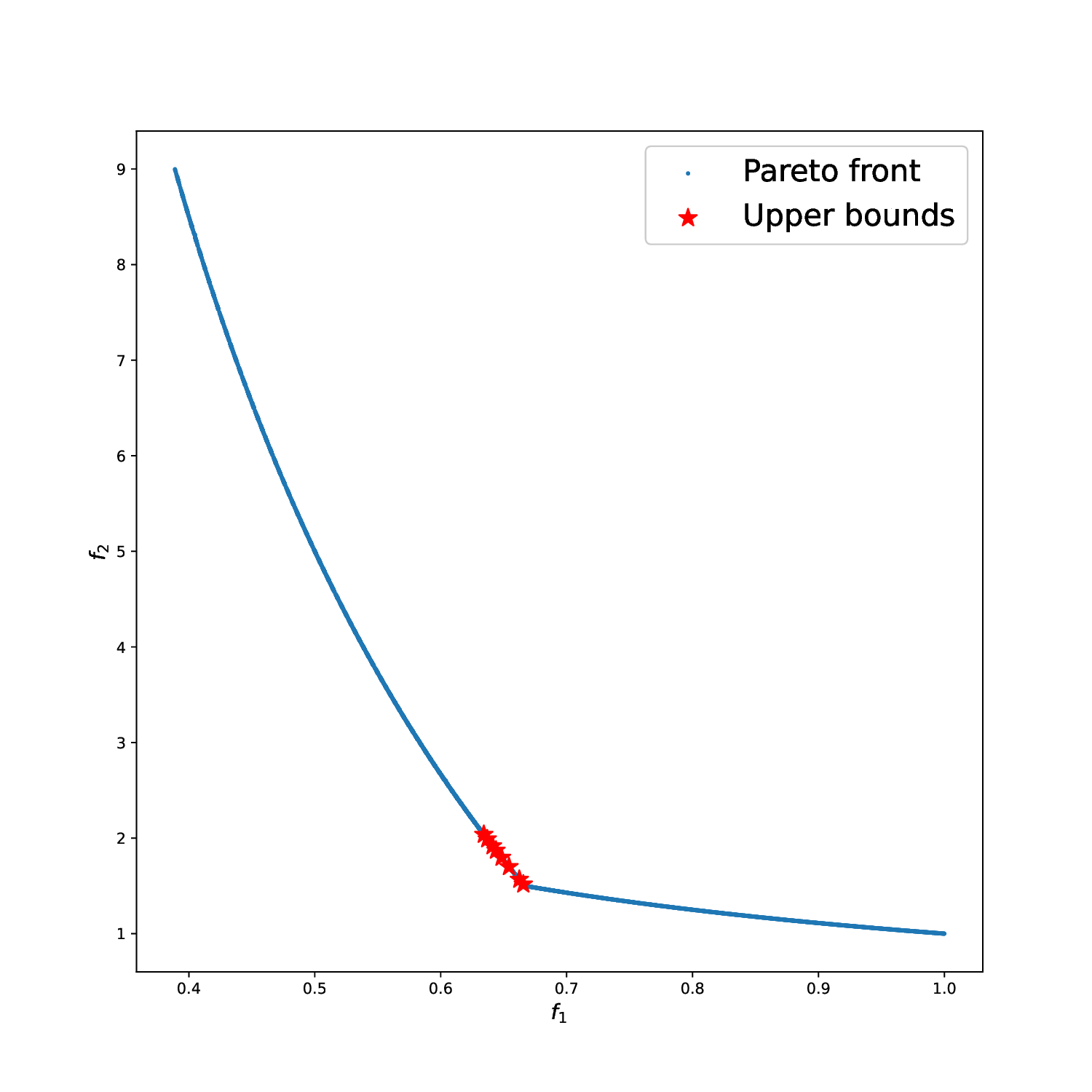}}
\subfigure[KITA]{
\includegraphics[width=0.30\textwidth]{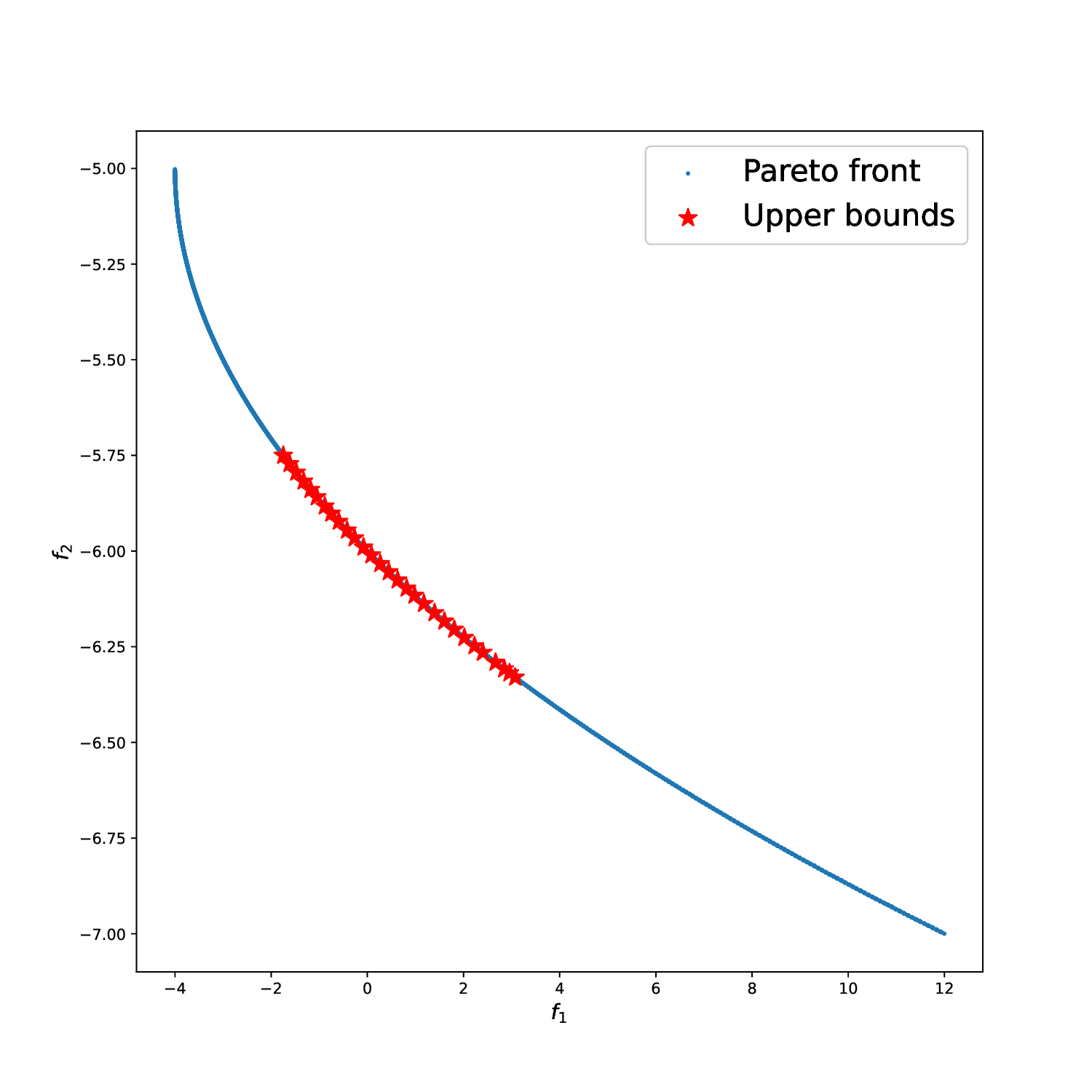}}
\caption{Results of CBB on three constrained problems.}\label{fig5}
\end{figure}

\subsection{Real-world constrained multiobjective optimization problems}
Finally, we apply CBB to several real-world constrained problems, including cantilever beam design, simply supported I-beam design, welded beam design, car side impact design problems and water resource management problems. These problems have 2- to 5-objectives and multiple inequality constraints. Specific descriptions of the problems can be found in the real-world constrained multiobjective optimization test-suite \cite{ref67}. For 2-objective problems, we use CBB with the cone $\mathcal{C}_{0.75}$; for 3 or more objective problems, we use CBB with the cone $\mathcal{C}_{(w,\theta_1)}$. All Pareto fronts are found by CBB with $\mathcal{C}_{0}$. The results are shown in Fig. \ref{fig6}. Obviously, CBB is applicable to two or more objectives, and linear or nonlinear constraints.

\begin{figure}[htbp]
\centering
\subfigure[Cantilever beam design]{
\includegraphics[width=0.30\textwidth]{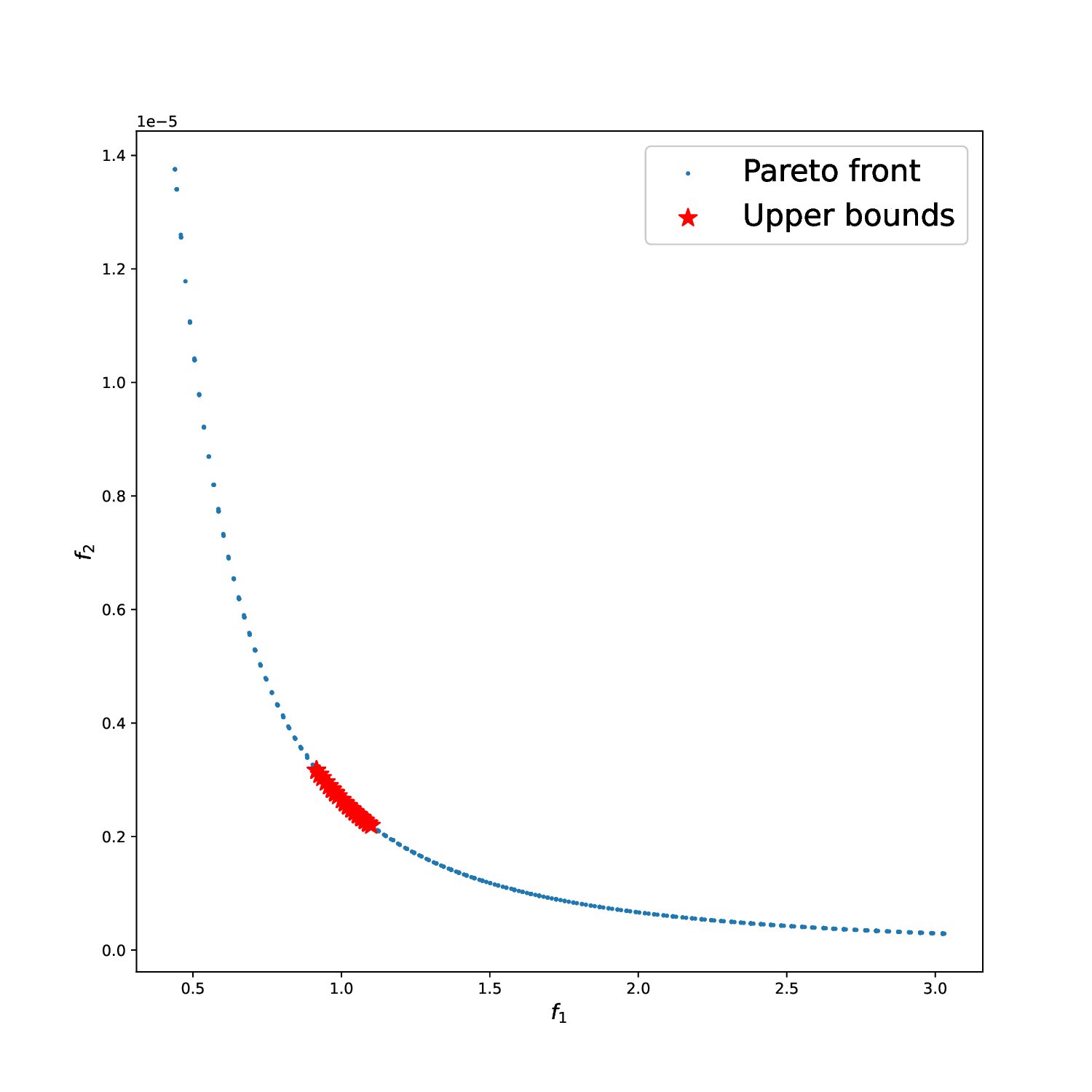}}
\subfigure[Simply supported I-beam design]{
\includegraphics[width=0.30\textwidth]{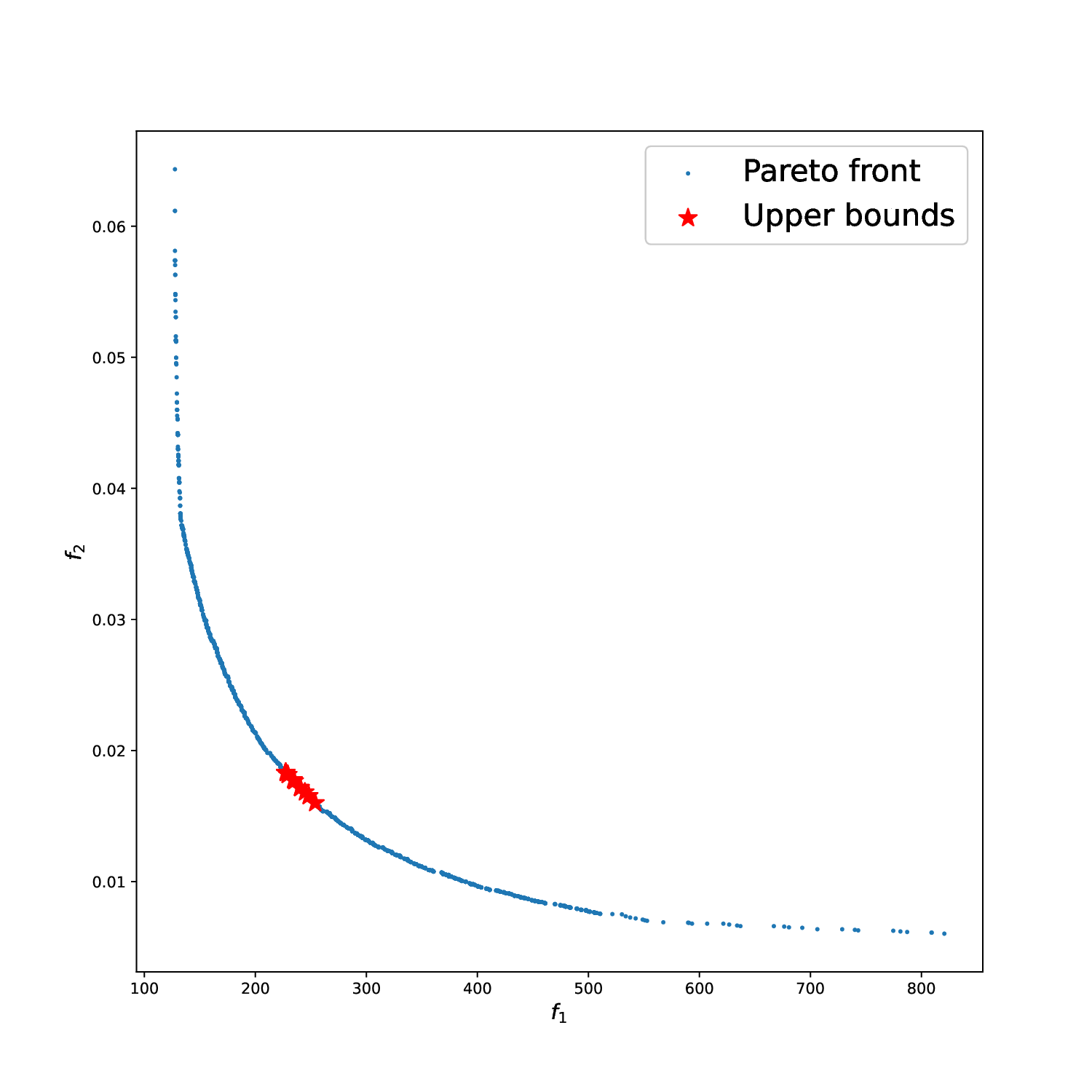}}
\subfigure[Welded beam design]{
\includegraphics[width=0.30\textwidth]{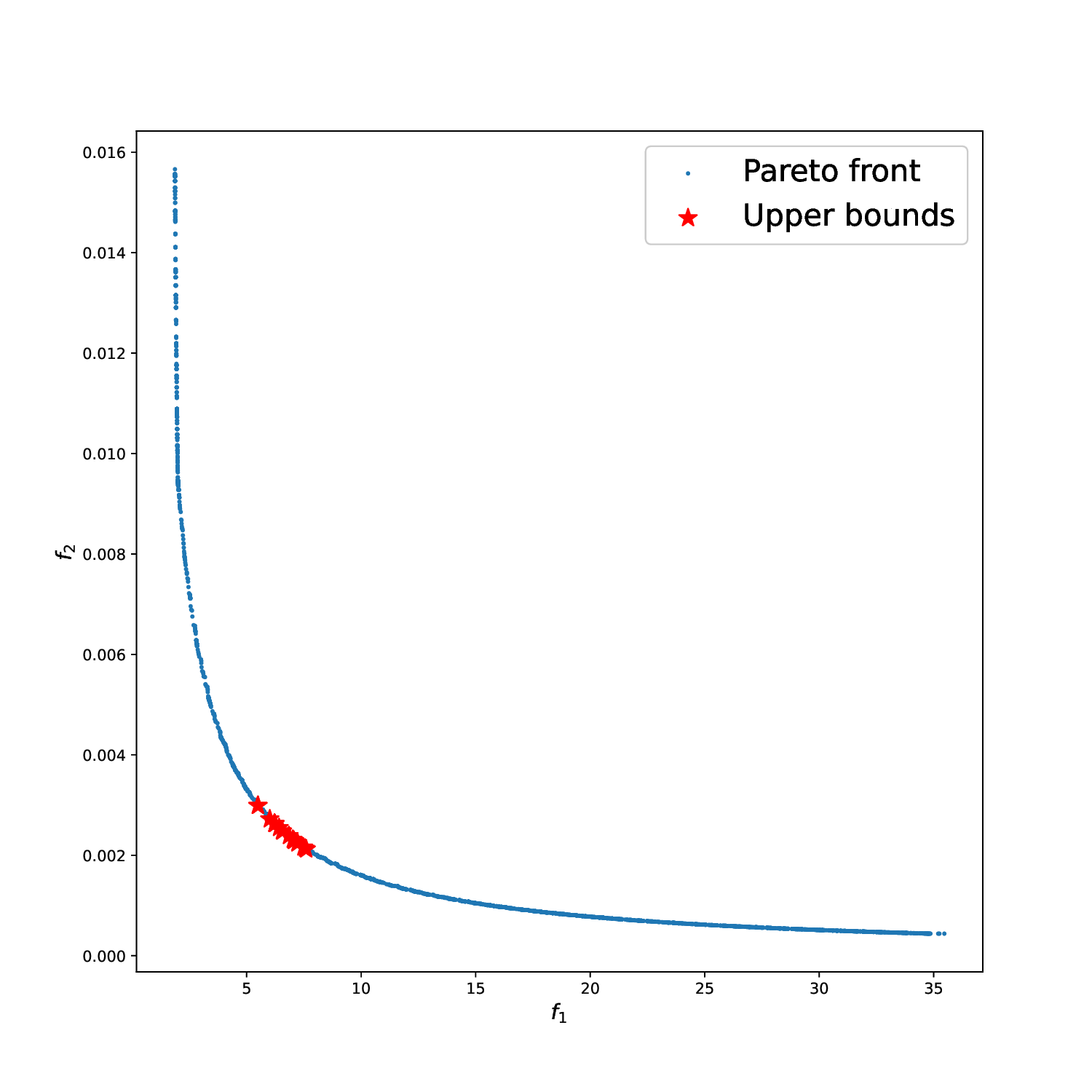}}
\subfigure[Car side impact design]{
\includegraphics[width=0.7\textwidth]{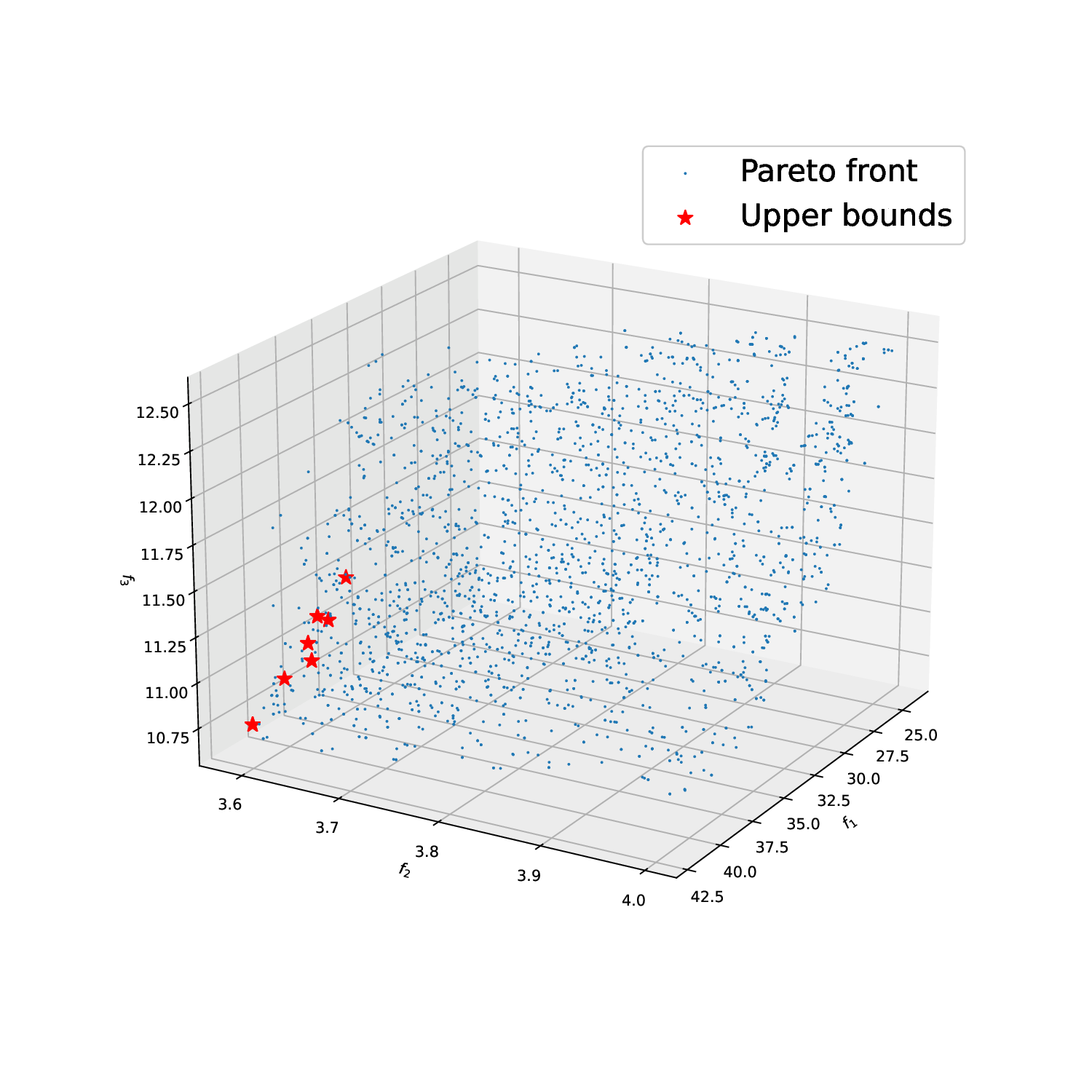}}
\subfigure[Water resource management]{
\includegraphics[width=0.9\textwidth]{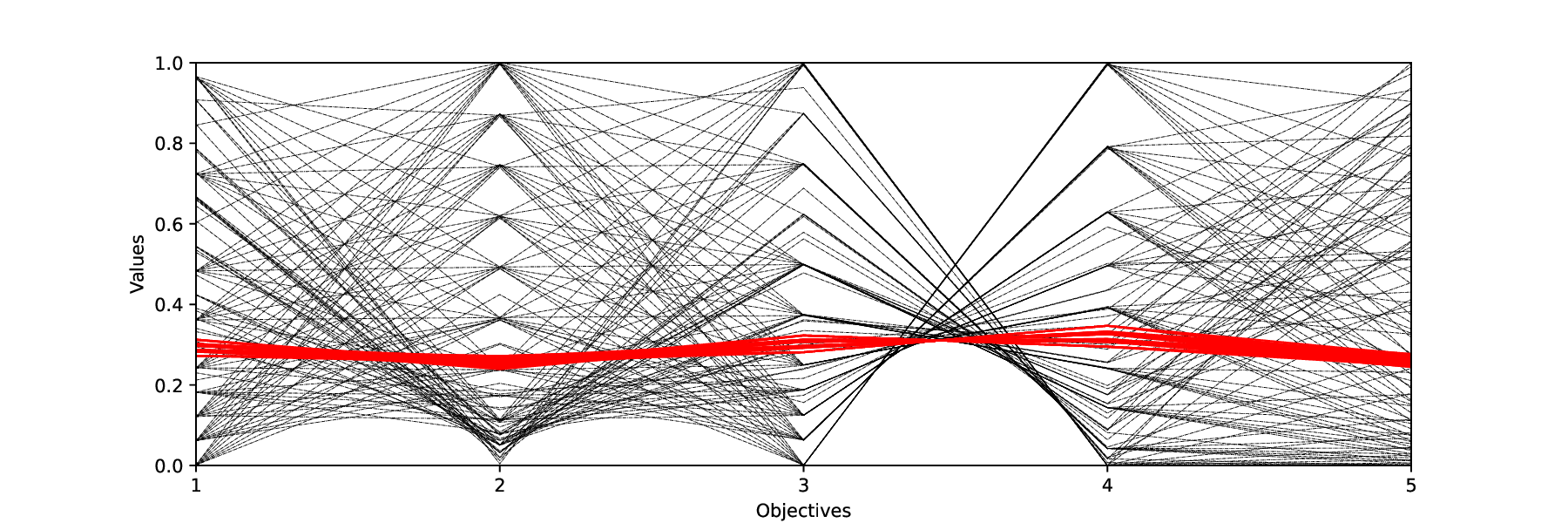}}
\caption{Results of CBB on real-world constrained problems.}\label{fig6}
\end{figure}

\section{Conclusion}

Many branch and bound algorithms for MOPs aim to approximate the entire Pareto optimal solution set. However, their solution processes are considered to be resource-intensive and time-consuming. An effective way to reduce their computational cost is to add additional preference information to the solution process. In this paper, we introduce preferences represented by ordering cones into the branch and bound algorithm, and propose the cone dominance-based branch and bound algorithm. The basic idea was to adopt the cone dominance induced by a pointed closed convex cone which is larger than $\mathbb{R}^m_+$ in the discarding test. In particular, we consider both polyhedral and non-polyhedral cones, and propose the corresponding discarding tests. This approach resulted in the removal of subboxes that did not contain efficient solutions with respect to the general cone, thereby significantly reducing the number of subboxes and candidate solutions. The efficacy and applicability of the proposed algorithm have been demonstrated through numerical experiments.

It should be noted that the algorithm is not proposed for vector optimization problems. However, from the description of the problem, there is no difference between problem \eqref{VP} and a general vector optimization problem. This is because our study does not apply to all pointed closed convex cones. Instead, our results are derived under the assumption that the considered cones contain $\mathbb{R}^m_+$. This assumption ensures that the transitivity of the cone orderings can be applied to Inequality \eqref{IE:2.2} so as to justify the cone dominance-based discarding tests. If a lower bound which cone-dominates any solution in a subbox can be obtained, then the cone dominance-based discarding test remains valid even in the absence of this assumption. Consequently, the proposed algorithm can be directly applied to the vector optimization case.

\begin{acknowledgements}
This work is supported by the Major Program of National Natural Science Foundation of China (Nos. 11991020, 11991024), the General Program of National Natural Science Foundation of China (No. 11971084), the Team Project of Innovation Leading Talent in Chongqing (No. CQYC20210309536), the NSFC-RGC (Hong Kong) Joint Research Program (No. 12261160365), and the Scientific Research Foundation for the PhD (Ningbo University of Technology, No. 2090011540025).
\end{acknowledgements}

\end{document}